\renewcommand{\qedsymbol}{$\blacksquare$}
\newtheorem{theorem}{Theorem}
\theoremstyle{plain}
\newtheorem{observation}[theorem]{Observation}
\newtheorem{lemma}[theorem]{Lemma}
\newtheorem{proposition}[theorem]{Proposition}
\numberwithin{equation}{section}
\newtheorem*{acknowledgement*}{Acknowledgement}
\newtheorem*{algorithm*}{Algorithm}
\newtheorem*{axiom*}{Axiom}
\newtheorem*{case*}{Case}
\newtheorem*{claim*}{Claim}
\newtheorem*{observation*}{Observation}
\newtheorem*{conclusion*}{Conclusion}
\newtheorem*{condition*}{Condition}
\newtheorem*{conjecture*}{Conjecture}
\newtheorem*{corollary*}{Corollary}
\newtheorem*{criterion*}{Criterion}
\newtheorem*{definition*}{Definition}
\newtheorem*{example*}{Example}
\newtheorem*{exercise*}{Exercise}
\newtheorem*{lemma*}{Lemma}
\newtheorem*{notation*}{Notation}
\newtheorem*{problem*}{Problem}
\newtheorem*{proposition*}{Proposition}
\newtheorem*{remark*}{Remark}
\newtheorem*{solution*}{Solution}
\newtheorem*{summary*}{Summary}
\newcommand{\eps}{\varepsilon}
\newcommand{\wh}[1]{\widehat{#1}}
\newcommand{\NN}{\mathbb{N}}
\newcommand{\RR}{\mathbb{R}}
\newcommand{\uno}{\mathbbm{1}}
\newcommand{\alocc}[2]{|\!|#1|\!|_{#2}}
\newcommand{\occ}[2]{|#1|_{#2}}
\begin{document}

\title{From randomness in two symbols \linebreak to randomness in three symbols}
%From randomness in base two to randomness in base three}
\author[1]{Ariel Zylber \\ \today}
\address{Departamento de Computaci\'on,
Facultad de Ciencias Exactas y Naturales,
Universidad de Buenos Aires }
\email{ azylber@dc.uba.ar}
%\date{\today}

\begin{abstract}
In 1909 Borel defined normality as a notion of randomness 
of the digits of the representation of a real number over certain base 
(fractional expansion). If we think the representation of a number over a base as 
% a 
an
infinite sequence of symbols from a finite alphabet $A$, 
we can define normality directly for words of symbols of $A$: 
A word $x$ is normal to the alphabet $A$ if every finite block of 
symbols from $A$ appears with the same asymptotic frequency in $x$ 
as every other block of the same length. Many examples of normal words 
have been found since its definition, being Champernowne in 1933 the first 
to show an explicit and simple 
%example. 
instance.
Moreover, it has been characterized how we can select subsequences 
of a normal word $x$ preserving its normality, always leaving %fixed 
the alphabet $A$
fixed.
In this 
% word 
work
we consider the dual problem which consists of inserting symbols 
%on 
in
infinite positions of a given 
%words,
word,
in such a way that normality is preserved. 
Specifically, given a symbol $s$ that is not present on the original alphabet $A$ 
and given 
%$x$, a normal word to the alphabet $A$ 
a word $x$ that is normal to the alphabet $A$ 
we solve how to insert the symbol $s$ 
%on 
in
infinite positions of the word $x$ such that the resulting word 
is normal to the expanded alphabet $A\cup \{s\}$.\\
\textbf{Keywords:} Normal numbers; combinatorics on words; Champernowne number;\\
\textbf{Mathematics Subject Classification:} Primary 05-04; 11K16;
\end{abstract}
\maketitle

%\tableofcontents
\section{Introduction and statement of results}

In 1909, Borel \cite{borel1909probabilites} defined normality as a notion of randomness
% in
of
 the digits 
of the fractional expansion of a real number over some base. 
Since then many examples of normal words have been found,
 Champernowne\cite{Champernowne:1933} in 1933 was the first  
to show an explicit and simple 
%example. 
instance,
$$
12345678910111213141516171819202122232425262728\ldots
$$
the concatenation of all the natural numbers in the natural order is a  normal word for  the alphabet 
$A=\{0,1,\ldots,9\}$.  
Moreover, it has been characterized how we can select subsequences 
of a normal word $x$ preserving its normality, always leaving fixed the alphabet 
$A$, see \cite{agafonov1968normal, kamae1975normal, vandehey2016uncanny}.

In this work we 
%solve 
consider
how normality of words is affected when we add new symbols to the alphabet. 
%
%Clearly, we lose the normality of a word if we add a new symbol to the alphabet 
%since the word contains no appearances of this new symbol. 
Clearly, if a word $x$ is normal to a given alphabet $A$ it is not normal to an alphabet $A'$
that  results from adding a new symbol to $A$,
because  the word $x$ contains no appearances of this new symbol. 
A natural question that comes up is if it is possible to insert 
%some 
occurrences of this new symbol 
%in the middle of 
along
the word $x$ to make it normal in the 
%new 
expanded alphabet. 
We give a positive answer of this question in Theorem~\ref{maintheo}. 
%In this theorem 

Fix an alphabet $A$ and a new symbol $s$.
For any given normal word $x$ in $A^\omega$ the proof of Theorem~\ref{maintheo}
%we give
gives a way of inserting occurrences of the new symbol  $s$ 
%in the word 
along the word $x$
that depends on the speed of convergence of  normality of the word $x$.
%The main idea behind the theorem is that we take finite substrings of Champer\-nowne like words over the extended alphabet that have the property of having a good distribution of the new symbol on it (null discrepancy over certain length). We will always talk about discrepancy of aligned occurrences of a word. We take advantage of that distribution and try to replicate it in the normal word so the occurrences of the new symbol in the extended word match the pattern of the Champernowne like word.\\
The proof is purely combinatorial and it is completely elementary except for the 
use of  the characterization of normality given by Piatetski-Shapiro 
\cite{Piatetski-Shapiro:1951, bugeaud2012distribution}
also known as the Hot Spot Lemma.

The main idea in the proof of Theorem~\ref{maintheo}
is to use  a Champernowne-like word in the expanded alphabet as a reference for insertion 
of the new symbol $s$ in the given normal word $x$.
We call the discrepancy of a finite word $w$  with respect to the length $\ell$
to the maximum difference between the expected frequency and the actual frequency  in $w$ 
of any block of $\ell$ digits.
The key ingredient of the proof of Theorem~\ref{maintheo}
 is given by  Lemma~\ref{fund} where we prove 
that if the discrepancy of a finite word $w$ in the original alphabet  
with respect to a given  length 
is low enough then  inserting occurrences of the new symbol in $w$
%in  
according to the pattern of a 
%finite 
Champernowne-like word
yields an expanded word
 with also low discrepancy
but now with respect to an exponentially shorter length.
The proof of this lemma relies on 
bounding the number of 
%aligned 
occurrences
of a word in the expanded word.
%In the fourth section we finally give a proof of the theorem. 
In the proof of Theorem~\ref{maintheo}  
we  take consecutive segments of the original word $x$, of increasing length,
and expand each of them according to the pattern of digits  given by 
a Champernowne-like
% finite
 word.
The difficulty  here is in determining the appropriate lengths of these segments.
They have to be long enough so that their discrepancy catches 
up with the discrepancy of the Champernowne-like word.
At last, an application of  Piatetski-Shapiro's characterization 
of normality allows us to conclude the normality of the expanded word.

\subsection{Primary definitions}
%To recall this definition over the context of infinite words.
% over an alphabet we first introduce some notation.\\
We call an alphabet to a finite set $A$ of symbols. 
Given an alphabet $A$, we write $A^k$ for the set of all words of length $k$, 
$A^*$ for the set of all finite words and $A^{\omega}$ for the set of all infinite words of $A$. 
Therefore, $(A^k)^*$ denotes the set of all finite words composed of the words 
of length $k$ of $A$ as symbols, or equivalently, the set of all finite words of length multiple of $k$.

The length of a finite word $v$ is denoted by $|v|$. 
Given two words $u$ and $v$ with $u$ finite, we denote $uv$ 
to the word resulting of concatenating $u$ and $v$. 
The position of symbols in words are numbered starting from $1$. 
For a word $v$, we denote $v[i,j]$ as the substring of $v$ from position $i$ to position $j$. 
We call $v[i]$ to the symbol corresponding to the $i$-th position of $v$. 
We call substring of a word $v$ to a word of the form $v[i,j]$ for some $i, j \in \NN$ 
such that $1 \leq i \leq j \leq |v|$ and subsequence of $v$ to a 
word of the form $v[i_1]v[i_2] \ldots v[i_k]$ for some $i_1, i_2 ,\ldots, i_k \in \NN$
 with $i_1 < i_2 < \ldots < i_k \leq |v|$.

Given some alphabet $A$ and $u, v \in A^*$, we %denote 
write 
$$
\alocc{u}{v} %||u||_v 
= |\{i \leq |u|-|v|+1: u[i,i+|v|-1] = v \text{ and } i \equiv 1 \mod{|v|}\}| 
$$
for the number of aligned occurrences of $v$ in $u$. 
Thus, if we split the word $u$ in 
consecutive strings
of length $|v|$  and possibly a shorter last string, 
$\alocc{u}{v}$ is the number of those strings that coincide with $v$.

With this notation we can state the  formulation 
of  normality that is most convenient for 
to solve our problem.
A thorough  presentation of normality can be read from 
the monographs \cite{bugeaud2012distribution,BecherCarton2017}.

\begin{definition*}[Normality to a given alphabet]
Given an alphabet $A$ and some word $u \in A^{\omega}$, 
we say that $u$ is simply normal to length $\ell$ if every $v \in A^{\ell}$ verifies that 
$$
\lim_{n \rightarrow \infty} \frac{\alocc{u[1,\ell n]}{v}}{n} = \frac{1}{|A|^{\ell}}.
$$
We say that $u$ is normal if it is simply normal to every length $\ell \in \NN$.
\end{definition*}

From now on, we fix a base $b$ and we define $A = \{0, 1, \ldots, b-1 \}$ 
and $\widehat{A} = \{0, 1, \ldots, b \}$,
%as 
the alphabets whose symbols are the digits in base $b$ and base $b+1$ respectively. 
We write  $v \upharpoonright n$  to denote  $v[1,n]$ which is  
the word  consisting of the first $n$ symbols of $v$, 
and  we write $v \upharpoonleft n$ to denote 
the word   that results from removing the last  $n$ symbols of $v$.

\begin{definition*}[reduction operator]
We define the reduction operator $r: \widehat{A}^* \rightarrow A^*$ 
as the operator that removes the  symbols
 $b$
from a  word in $ \widehat{A}^*$.
Precisely, given a word $v\in \widehat{A}^*$,
$$
v = v_1v_2\ldots v_k
$$
 where $v_i$ is the $i$-th symbol of $v$, then 
$$
r(v) = v_{r_1}v_{r_2}v_{r_t}
$$
 where 
$$
t = |v| - \alocc{v}{b}$$ and 
$$
r_i = \min(\{j \in \mathbb{N}: |v \upharpoonright j| - \alocc{v \upharpoonright j}{b} = i\}).
$$
We define the reduction operator $r$ on infinite words $v \in \widehat{A}^{\omega}$ in a similar way.
\end{definition*}
%Clearly, $r$ is a retraction of $e_n$ for all $n \in \mathbb{N}$, that is, 
%\[
%r \circ e_n = id.
%\] 

\subsection{The main theorem}

\begin{theorem}\label{maintheo}
Let $v \in A^{\omega}$ be a normal word then there exists some normal word 
$\wh{v} \in \wh{A}^{\omega}$ such that $r(\wh{v}) = v$.
\end{theorem}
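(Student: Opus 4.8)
The plan is to construct $\wh{v}$ by inserting copies of the new symbol $b$ into $v$ so that, in the expanded alphabet $\wh{A}$, every block frequency converges to the uniform value $1/(b+1)^\ell$. The guiding principle, as the introduction indicates, is to use a Champernowne-like word $c$ over $\wh{A}$ as a \emph{template}: the positions where $b$ occurs in $c$ tell us where to splice copies of $b$ into $v$. Concretely, I would process $v$ in consecutive segments $v = B_1 B_2 B_3 \cdots$ of increasing lengths $\ell_1 < \ell_2 < \cdots$, and expand each segment $B_k$ by interleaving it with the symbols $b$ according to the pattern dictated by the corresponding stretch of $c$. Since the reduction operator $r$ simply deletes every occurrence of $b$, this splicing procedure automatically guarantees $r(\wh{v}) = v$, so that requirement is satisfied by construction and the entire content of the proof lies in establishing normality of $\wh{v}$.

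The engine of the argument is Lemma~\ref{fund}. The strategy is to fix a target length $\ell$ for blocks in $\wh{A}$, and to show that the \emph{discrepancy} of $\wh{v}$ at length $\ell$ is small. The point of the lemma is that if a finite word $w$ over $A$ has low discrepancy with respect to some length $L$, then inserting the symbol $b$ into $w$ according to the Champernowne-like pattern produces an expanded word whose discrepancy with respect to the exponentially shorter length $\ell \approx \log_b L$ (roughly) is also controlled. Thus I would choose the segment lengths $\ell_k$ large enough that the discrepancy of $v$ restricted to each segment, at the appropriate \emph{long} length $L_k$, has already fallen below the threshold the lemma requires; normality of $v$ supplies exactly such a speed of convergence on sufficiently long prefixes. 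Applying Lemma~\ref{fund} segment by segment then gives uniform control of the discrepancy of $\wh{v}$ at every fixed target length, provided the segments are long enough to dominate the contribution of the Champernowne template's own discrepancy.

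To pass from this per-segment, per-length discrepancy control to full normality of the infinite word $\wh{v}$, I would invoke the Piatetski-Shapiro characterization (the Hot Spot Lemma). Rather than verify simple normality at every length directly along prefixes $\wh{v}\upharpoonright n$ — which is awkward because the insertion of $b$'s makes the correspondence between prefix lengths in $v$ and in $\wh{v}$ irregular — the Hot Spot Lemma lets me conclude normality from a uniform upper bound on aligned-occurrence counts of every block $u \in \wh{A}^*$, up to a constant factor of the expected count $|\wh{v}\upharpoonright n|/(b+1)^{|u|}$. The discrepancy bounds assembled from Lemma~\ref{fund} across all segments should yield precisely such a uniform linear bound, and then the Hot Spot Lemma closes the argument.

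The main obstacle, as the introduction itself flags, is \textbf{choosing the segment lengths $\ell_k$ correctly}. They must grow fast enough that on each segment the discrepancy of $v$ has converged enough for Lemma~\ref{fund} to apply at the relevant long length $L_k$, yet the boundary effects — the blocks straddling two consecutive segments $B_k B_{k+1}$, and the mismatch between where $c$'s pattern resets and where the segments break — must contribute only a vanishing fraction of the total count. Balancing these competing demands, so that the convergence speed of $v$'s normality is genuinely sufficient and the accumulated boundary error stays $o(n)$, is where the delicate bookkeeping lies; everything else is a matter of combining the reduction identity with the two cited tools.
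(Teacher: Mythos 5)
Your outline follows the same architecture as the paper's actual proof: cut $v$ into consecutive segments whose lengths are dictated by the speed of convergence of $v$'s normality, expand each segment according to a Champernowne-like pattern, control discrepancies via Lemma~\ref{fund}, and close with the Hot Spot Lemma. However, the proposal has genuine gaps at exactly the points you set aside as ``bookkeeping,'' and one point where the mechanism as you describe it would fail. You speak of a single template $c$ and of obtaining ``uniform control of the discrepancy of $\wh{v}$ at every fixed target length'' segment by segment; but Lemma~\ref{fund} applied at expansion order $n$ controls blocks of length $n$ \emph{only}, at the price of a hypothesis at the exponentially longer length $\ell_n$. If every segment were expanded at the same order, block lengths exceeding that order could never be controlled and $\wh{v}$ would not be normal. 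The paper's construction makes the order grow with the segment index (order $2^n$ for the $n$-th segment, with Proposition~\ref{mitad} bridging consecutive orders), and then, for a fixed word $u$, only the tail of segments whose order is large relative to $|u|$ is controlled; the finitely many early segments are absorbed into a constant $\delta$ that disappears after dividing by the prefix length and taking $\limsup$. This growing-order mechanism is not a cosmetic detail: it forces the thresholds $\eps_n$ to be chosen against the constants $c_{2^n}$, $c_{2^{n+1}}$ of Lemma~\ref{fund} and the blowup factors of the auxiliary propositions, which is precisely the segment-length balancing you leave unresolved.

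Two further steps that you do not mention are genuinely needed and not routine. First, an arbitrary prefix $\wh{v}[1,M]$ ends in the middle of a segment, and low discrepancy of a word does \emph{not} pass to its prefixes; the paper handles this by demanding the defining inequality for $\eps_n$ hold for \emph{all} sufficiently long prefixes of the tail of $v$ (the constants $k_n$), then using Proposition~\ref{sufijo} to control the partial segment and Proposition~\ref{concat} to glue it to the preceding full one. Second, your reading of the Hot Spot Lemma is off: the version in the paper (Lemma~\ref{hotspot}) bounds \emph{non-aligned} occurrences $\occ{\wh{v}[1,M]}{u}$, whereas every bound coming out of Lemma~\ref{fund} concerns \emph{aligned} occurrences of blocks whose length equals the expansion order. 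The conversion is Proposition~\ref{alinanoalin}, and it is there (together with Observation~\ref{concatapp}) that the straddling-block boundary terms you allude to are actually paid for. Without these two ingredients the argument does not close; with them, your plan becomes essentially the paper's proof.
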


Before giving the proof of  Theorem~\ref{maintheo} we need
some intermediate results.

\section{Tools and lemmas}

We define here in a precise way how we expand a word according to 
the pattern of a Champernowne-like word.

\begin{definition*}[Champernowne-like words]
For each $n \in \mathbb{N}$, let $w_n$ be the word 
consisting of the concatenation in lexicographical order of all the words of $\widehat{A}^n$.
\end{definition*}
Thus, for $A=\{0,1\} $,
$w_3 = 
000
001
010
011
100
101
110
111$.
\begin{definition*}[The wildcard operator]
Let $B = \{b, \star\}$ the alphabet consisting of only the symbols $'b'$ and $'\star'$.
We define the \textit{wildcard operator} $(\star): \widehat{A}^* \rightarrow B^*$ 
as the operator that given $v \in \widehat{A}^*$ replaces all its symbols different from~$'b'$ 
with a wildcard~$'\star'$.
Formally, if 
$$
v = v_1 v_2 \ldots v_k
$$ 
where $v_i$ is the $i$-th symbol from $v$, 
then, 
$$
(\star)(v) = v^{\star}_1v^{\star}_2 \ldots v^{\star}_k
$$ 
where 
$$ 
v^{\star}_i = \begin{cases} b, & \mbox{if } v_i = b \\ {\star}, & \mbox{otherwise} \end{cases}
$$
We write $v^{\star} = ({\star})(v)$.
\end{definition*}
 It follows easily that if 
$u, v \in \widehat{A}^*$ then $(uv)^{\star} = u^{\star}v^{\star}$.

%\V
%We write  $v \upharpoonright n$  to denote  $v[1,n]$ 
%%(this is the word  consisting of the first $n$ symbols of $v$ 
%and  we write $v \upharpoonleft n$ to denote 
%the word that consists of the word  that results from removing the first  $n$ symbols of $v$.
%\A

\begin{definition*}[The expansion of order $n$ of a given word]
For each $n \in \mathbb{N}$ we let
% let $\ell_n = |w_n|_{\star}$ 
%be the number of wildcard digits in $w^{\star}_n$ and $\widehat{\ell}_n = |w_n|$.
\begin{align*}
\ell_n& = \alocc{w_n}{\star}
\\
\widehat{\ell}_n& = |w_n|.
\end{align*}
%For each $i \in \mathbb{N}, 1 \leq i \leq \widehat{l}_n$ let $$wi_n(i) = |\{ j \leq i : (w_n)_j = {\star} \}|$$ Thus, $wi_n(i)$ counts the number of wilcards in $w_n$ up to the $i$-th character.\\
For each $i \in \mathbb{N}$ such that
$ 1 \leq i \leq \widehat{\ell}_n$ define
$$
m(n,i) = |\{ j \leq i : (w_n)_j = {\star} \}| 
= \alocc{w_n \upharpoonright i}{\star}.
$$ 
Thus, $m(n,i)$ counts the number of wildcards in $w_n$ up to the $i$-th symbol.

The expansion 
$e_n: A^{\ell_n} \to \widehat{A}^{\widehat{\ell}_n}$ is
%Given a word $v \in A^{\ell_n}$ we define   the extension 
%of order $n$ of $v$ 
%as $e_n(v) = \widehat{v} \in \widehat{A}^{\widehat{\ell}_n}$ 
such that, if 
$$
v = v_1v_2 \ldots v_{\ell_n}
$$ 
then 
$$
\widehat{v} = \widehat{v}_1\widehat{v}_2 \ldots \widehat{v}_{\widehat{\ell}_n}
$$
%where $$\widehat{v}_i = \begin{cases} b, & \mbox{if } (w_n)_i = b \\ v_{wi_n(i)}, & \mbox{otherwise} \end{cases}$$
where 
$$
\widehat{v}_i = \begin{cases} b, & \mbox{if } (w_n)_i = b \\ v_{m(n,i)}, & \mbox{otherwise}.
\end{cases}
$$
%Hence,
%$e_n(v)$ takes $w_n$ and replaces each wildcard symbol with each symbol of $v$, in order.
\end{definition*}

Thus, given a word $v \in A^{\ell_n}$,  
the  expanded word $e_n(v)$  is obtained as follows: 
take  $w_n$, replace all its symbols different from $b$ by a wildcard symbol,
and then replace in each  wildcard symbol with the symbols of $v$ in order.
Clearly,  $v$ is a subsequence of $e_n(v)$ and the only digits 
that are not part of that subsequence are all $b$'s.

We can extend $e_n$ to $(A^{\ell_n})^*$ by concatenating the expansion
of each block of $\ell_n$ digits. 
Namely, if $v \in (A^{\ell_n})^*$ such that 
$$
v = v_1v_2 \ldots v_k
$$ 
where $|v_i| = \ell_n$ for all $1 \leq i \leq k$, then 
$$
e_n(v) = e_n(v_1)e_n(v_2) \ldots e_n(v_k).
$$

%\begin{definition*}[reduction operator]
%We define the reduction operator $r: \widehat{A}^* \rightarrow A^*$ 
%as the operator that removes the  symbols
% \V $b$
%from a  word in $ \widehat{A}^*$.\A 
%Precisely, given a word $v\in \widehat{A}^*$,
%$$
%v = v_1v_2\ldots v_k
%$$
% where $v_i$ is the $i$-th symbol of $v$, then 
%$$
%r(v) = v_{r_1}v_{r_2}v_{r_t}
%$$
% where 
%$$
%t = |v| - |v|_b$$ and 
%$$
%r_i = \min(\{j \in \mathbb{N}: |v \upharpoonright j| - |v \upharpoonright j|_b = i\}).
%$$
%We define the reduction $r$  infinite words $v \in \widehat{A}^{\omega}$ in a similar way.
%\end{definition*}
Clearly,  the reduction $r$ is a retraction of $e_n$ for all $n \in \mathbb{N}$, that is, 
\[
r \circ e_n = id.
\]

The next observations follow from the definitions.
%\subsection{The discrepancy of an extension}
%We establish here the fundamental lemma for the proof.

\begin{observation}\label{valorell} 
$\wh{\ell}_n = n(b+1)^n$ and $\ell_n = nb(b+1)^{n-1}$ for all $n \in \NN$. 
\end{observation}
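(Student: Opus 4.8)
The plan is to verify both identities by a direct counting argument on the word $w_n$, which by definition is the concatenation in lexicographical order of all the words of $\widehat{A}^n$.

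First I would establish $\widehat{\ell}_n = n(b+1)^n$. By definition $\widehat{\ell}_n = |w_n|$, and since the alphabet $\widehat{A} = \{0,1,\ldots,b\}$ has $b+1$ symbols there are exactly $(b+1)^n$ words in $\widehat{A}^n$, each of length $n$. Concatenating all of them produces a word of length $n(b+1)^n$, which is the first claim.

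For the second identity I would first unwind the definition $\ell_n = \alocc{w_n}{\star}$. Since $\star$ is a single symbol, aligned occurrences coincide with ordinary occurrences, so $\ell_n$ counts the positions of $w_n$ that the wildcard operator $(\star)$ replaces by $\star$, that is, the number of symbols of $w_n$ different from $b$. The key step is then a symmetry count: for each fixed symbol $a \in \widehat{A}$ and each of the $n$ coordinate positions inside a length-$n$ word, exactly $(b+1)^{n-1}$ of the words in $\widehat{A}^n$ carry $a$ in that coordinate, since the remaining $n-1$ coordinates range freely over $\widehat{A}$. Summing over the $n$ coordinates, the symbol $a$ occurs $n(b+1)^{n-1}$ times in $w_n$; in particular $b$ occurs $n(b+1)^{n-1}$ times. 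Subtracting the number of $b$'s from the total length gives $\ell_n = n(b+1)^n - n(b+1)^{n-1} = nb(b+1)^{n-1}$, as required.

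There is no serious obstacle here, as both identities are elementary counts. The only point that requires care is the correct reading of $\alocc{w_n}{\star}$ as the number of non-$b$ symbols of $w_n$, equivalently as the total length $\widehat{\ell}_n$ minus the number of occurrences of $b$; once this interpretation is fixed, the equalities follow by arithmetic.
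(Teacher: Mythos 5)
Your proposal is correct and follows essentially the same route as the paper: count the $(b+1)^n$ words of length $n$ to get $\wh{\ell}_n = n(b+1)^n$, then use the symmetry of $w_n$ to conclude that $b$ occurs $n(b+1)^{n-1}$ times and subtract. Your per-coordinate counting is just a slightly more explicit justification of the paper's one-line symmetry claim that every symbol occurs equally often in $w_n$, so there is nothing to add.
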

\begin{proof}
Since there are $(b+1)^n$ different words of length $n$ 
using $b+1$ symbols and each word has length $n$ we get $\wh{\ell}_n = n(b+1)^n$.
Since each symbol appears the same number of times in $w_n$ then 
$\alocc{w_n}{b} = n(b+1)^{n-1}$. It follows that 
$$
\ell_n = |w_n| - \alocc{w_n}{b} = n(b+1)^n - n(b+1)^{n-1} = nb(b+1)^{n-1}.
$$ 
%as desired
\end{proof}

 Given some alphabet $A$, we denote $\uno$ to the indicator function 
of the diagonal elements of $A^* \times A^*$. Namely, we define 
$\uno: A^* \times A^* \rightarrow \NN$ as 
$$
\uno(x,y) = \begin{cases} 1, & \mbox{if } x = y \\ 0, & \mbox{otherwise} \end{cases}
$$
We denote $\uno(x,y)$ as $\uno(x = y)$.

\begin{observation}\label{cuentasubs} Given an alphabet $C$ with $|C| = k$, 
some $v \in C^n$, some $m \in \NN$ such that $m > n$ and some $i \in \NN$ 
such that $0 \leq i \leq m-n$, then 
$$
\sum_{u \in A^m} \uno(u[i+1, i+n] = v) = k^{m-n}.
$$
\end{observation}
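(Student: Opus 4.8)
The plan is to evaluate the sum by elementary enumeration, since each summand is an indicator contributing $0$ or $1$. First I would observe that $\uno(u[i+1,i+n]=v)$ equals $1$ precisely when the length-$n$ window of $u$ occupying positions $i+1$ through $i+n$ spells out the word $v$, and equals $0$ otherwise. Consequently the sum simply counts the words $u \in C^m$ whose window at $[i+1,i+n]$ agrees with $v$, so the entire problem reduces to enumerating exactly those words.

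Next I would analyze how many symbols of $u$ are pinned down by the constraint $u[i+1,i+n]=v$. Because $v$ has length $n$, this condition fixes exactly the $n$ symbols in positions $i+1,\ldots,i+n$, forcing $u[i+j]=v[j]$ for each $1\le j\le n$; the hypothesis $0\le i\le m-n$ is precisely what guarantees that this window lies inside $\{1,\ldots,m\}$, so the constraint is well-posed and involves genuinely distinct positions. The remaining $m-n$ positions, namely those in $\{1,\ldots,i\}\cup\{i+n+1,\ldots,m\}$, are left entirely unconstrained.

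Finally I would invoke the multiplication principle: each of the $m-n$ free positions can be filled independently by any of the $|C|=k$ symbols, whereas the $n$ constrained positions admit a single choice each. This yields $k^{m-n}$ admissible words, which is exactly the asserted value. There is no substantive obstacle in this argument; the only place where a hypothesis is used is the harmless verification that the window $[i+1,i+n]$ fits inside a word of length $m$, which is the content of $0 \le i \le m-n$.
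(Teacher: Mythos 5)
Your proof is correct and is exactly the standard counting argument the paper has in mind: the paper states this observation without proof (as one of several that ``follow from the definitions''), and your argument---fixing the $n$ constrained positions and letting the remaining $m-n$ positions range freely over the $k$ symbols---is the canonical justification it implicitly relies on. You also correctly read the summation index as ranging over $C^m$ rather than $A^m$, which is a typo in the paper's statement.
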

\begin{observation}\label{obspartes} Given an alphabet $C$ with $|C| = k$, some $v \in C^n$ and $u \in (C^n)^*$ then 
$$
\alocc{u}{v} = \sum_{i = 0}^{|u|/n-1} \uno(u[in+1, in+n] = v).
$$
\end{observation}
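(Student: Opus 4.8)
The plan is to prove the identity by unwinding the definition of the aligned-occurrence count and reindexing the underlying set. First I would set $L = |u|/n$, which is a genuine natural number precisely because $u \in (C^n)^*$, and recall that by definition
$$
\alocc{u}{v} = |\{i \leq |u|-|v|+1 : u[i,i+|v|-1] = v \text{ and } i \equiv 1 \pmod{|v|}\}|.
$$
Since $|v| = n$, the task reduces to counting the aligned positions $i$ at which the length-$n$ block $u[i,i+n-1]$ equals $v$, and the main move is to replace the position index $i$ by a block index.

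Next I would carry out the reindexing. The condition $i \equiv 1 \pmod{n}$ together with $i \ge 1$ holds exactly when $i = jn+1$ for some integer $j \ge 0$, so $i \mapsto j$ is a bijection from the aligned positions onto a set of block indices. Under this substitution the matching condition $u[i,i+n-1]=v$ becomes $u[jn+1, jn+n] = v$, which is precisely the argument of the indicator in the claimed sum. For the range, the constraint $i \le |u|-|v|+1 = Ln - n + 1$ becomes $jn+1 \le (L-1)n+1$, i.e.\ $j \le L-1$, while $i \ge 1$ forces $j \ge 0$; hence $j$ ranges exactly over $\{0,1,\ldots,L-1\}$. This is the one spot where divisibility of $|u|$ by $n$ is essential: because $|u| = Ln$ has no remainder, the last aligned block is not truncated, so every such $j$ indexes a genuine length-$n$ block of $u$ and there is no off-by-one loss at the tail.

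With the bijection in place, counting the elements of the set on the left is the same as summing, over $j$ from $0$ to $L-1$, the indicator $\uno(u[jn+1, jn+n] = v)$, which (after renaming $j$ to $i$) is the right-hand side. I do not anticipate any real obstacle here: the argument is pure bookkeeping. The only step that demands care is the translation of the modular alignment condition into the block index $j$, together with the verification that the tail of $u$ contributes no partial block, which is exactly where the hypothesis $u \in (C^n)^*$ enters.
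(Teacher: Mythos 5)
Your proof is correct: the reindexing $i = jn+1$, the range check $0 \le j \le L-1$, and the observation that divisibility of $|u|$ by $n$ rules out a truncated tail block are exactly the bookkeeping needed. The paper offers no proof at all for this observation---it is listed among those that ``follow from the definitions''---so your argument is simply the careful spelling-out of that same definitional unwinding, not a different route.
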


\begin{observation}\label{igualdadext} 
Given $v, w \in \wh{A}^*$ then $v = w$ if and only if $v^{\star} = w^{\star}$ and $r(v) = r(w)$.
\end{observation}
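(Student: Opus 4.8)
The plan is to prove the two implications separately, with the forward direction being immediate and the converse requiring a careful bookkeeping of positions. For the forward implication, note that $(\star)$ and $r$ are well-defined functions on $\wh{A}^*$, so $v = w$ yields $v^{\star} = w^{\star}$ and $r(v) = r(w)$ at once, with nothing further to check. For the converse I would assume both $v^{\star} = w^{\star}$ and $r(v) = r(w)$, and argue that $v$ and $w$ agree symbol by symbol.

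Since the wildcard operator preserves length, $v^{\star} = w^{\star}$ forces $|v| = |w|$; write $k$ for this common length and $v = v_1 \cdots v_k$, $w = w_1 \cdots w_k$. I would then classify each index $i$ by the common value of $v^{\star}_i = w^{\star}_i$. If this value equals $b$, the definition of the wildcard operator gives $v_i = b$ and $w_i = b$, so the two words agree there. If instead it equals $\star$, then both $v_i$ and $w_i$ are non-$b$ symbols, and crucially the set of such positions is the same for $v$ and for $w$, precisely because $v^{\star} = w^{\star}$.

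It remains to show that the non-$b$ symbols themselves agree, and this is where the reduction operator enters. Enumerating the wildcard positions in increasing order as $p_1 < \cdots < p_t$ with $t = |v| - \alocc{v}{b}$, the defining formula for $r$ identifies these positions with the indices $r_i$, so that $r(v) = v_{p_1} \cdots v_{p_t}$; since $v^{\star} = w^{\star}$ produces the identical list of positions, also $r(w) = w_{p_1} \cdots w_{p_t}$. The hypothesis $r(v) = r(w)$ then gives $v_{p_j} = w_{p_j}$ for every $j$, so $v$ and $w$ coincide at all non-$b$ positions. Combined with the agreement at the $b$-positions, this yields $v_i = w_i$ for all $i$, hence $v = w$. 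The one point deserving attention is verifying that the reduction indices $r_i$ depend on the word only through the location of its $b$'s—equivalently, through $v^{\star}$—so that $v$ and $w$ genuinely share the same list $p_1, \dots, p_t$; once this is observed the remainder is routine.
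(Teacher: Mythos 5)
Your proof is correct. The paper offers no proof of this observation---it is listed among statements that ``follow from the definitions''---and your argument is exactly the straightforward verification the authors had in mind: the forward direction is trivial functoriality, and the converse works because $v^{\star}$ determines both the length and the positions of the $b$'s, while the reduction indices $r_i$ depend only on those positions, so $r(v)=r(w)$ fills in the remaining symbols identically.
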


\begin{observation}\label{appchamp} If $v \in B^n$ then $\alocc{w^{\star}_n}{v} = b^{\alocc{v}{\star}}$.
\end{observation}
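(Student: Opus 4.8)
The plan is to exploit the block structure of $w_n^\star$. First I would note that $w_n$ is the concatenation of all $(b+1)^n$ words of $\wh{A}^n$ in lexicographic order, so $|w_n| = \wh{\ell}_n = n(b+1)^n$ by Observation~\ref{valorell}; since the wildcard operator acts symbol by symbol it preserves length, so $w_n^\star \in (B^n)^*$ and $|w_n^\star|/n = (b+1)^n$. Because $|v| = n$, Observation~\ref{obspartes} applies directly and gives
$$
\alocc{w_n^\star}{v} = \sum_{i=0}^{(b+1)^n - 1} \uno\big(w_n^\star[in+1, in+n] = v\big),
$$
so the aligned occurrences of $v$ coincide exactly with the length-$n$ blocks of $w_n^\star$.

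Next I would identify these blocks with words of $\wh{A}^n$. Since the wildcard operator acts coordinatewise (equivalently, by the factorization $(uv)^\star = u^\star v^\star$ noted after its definition), each block satisfies $w_n^\star[in+1,in+n] = \big(w_n[in+1,in+n]\big)^\star$, and $w_n[in+1,in+n]$ is precisely the $(i+1)$-st word of $\wh{A}^n$ in lexicographic order. As $i$ runs from $0$ to $(b+1)^n-1$ these blocks enumerate every word of $\wh{A}^n$ exactly once, so the sum rewrites as
$$
\alocc{w_n^\star}{v} = \sum_{u \in \wh{A}^n} \uno(u^\star = v).
$$

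Finally I would count the words $u \in \wh{A}^n$ with $u^\star = v$. By the definition of the wildcard operator, $u^\star = v$ holds if and only if at every position $j$ with $v_j = b$ we have $u_j = b$ (one choice), while at every position $j$ with $v_j = \star$ we have $u_j \neq b$, i.e. $u_j \in \{0,\ldots,b-1\}$ ($b$ choices). The forced positions contribute a factor $1$ and each of the $\alocc{v}{\star}$ wildcard positions contributes a factor $b$ (recall $\alocc{v}{\star}$ simply counts the symbols $\star$ in $v$, as $\star$ has length one), so there are exactly $b^{\alocc{v}{\star}}$ such words, which is the claimed value.

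The argument is entirely elementary; the only point requiring care is the first step, namely confirming that $w_n^\star \in (B^n)^*$ so that the aligned occurrences of $v$ line up with the block boundaries of $w_n$ and Observation~\ref{obspartes} can be invoked. Once the occurrences are pinned to these blocks, the identification with words of $\wh{A}^n$ and the counting are routine.
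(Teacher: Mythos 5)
Your proof is correct, and it fills in exactly the argument the paper leaves implicit: the paper states this observation without proof (as one of several that ``follow from the definitions''), and your block decomposition of $w_n^\star$ via Observation~\ref{obspartes}, the identification of each length-$n$ block with $u^\star$ for $u$ ranging over $\wh{A}^n$, and the count of $b$ choices per wildcard position is precisely the intended reasoning.
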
 

\begin{observation}\label{appext} If $v \in A^{\ell_n}$ and $w \in (A^{\ell_n})^*$ then $\alocc{w}{v} = \alocc{e_n(w)}{e_n(v)}$.
\end{observation} 

\begin{lemma}\label{sumafund}
Given $w \in \wh{A}^n$ then 
$$
\sum_{u \in A^{\ell_n}} \alocc{e_n(u)}{w} = b^{\ell_n}.
$$ 
\end{lemma}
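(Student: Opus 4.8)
The plan is to expand the left-hand side into a double sum by counting aligned occurrences block by block, exchange the order of summation, and then reduce the inner count to an elementary factorization. First I would note that by Observation~\ref{valorell} the word $e_n(u)$ has length $\wh{\ell}_n = n(b+1)^n$, so it decomposes into exactly $(b+1)^n$ aligned blocks of length $n = |w|$, and it lies in $(\wh{A}^n)^*$. Applying Observation~\ref{obspartes} gives
$$
\alocc{e_n(u)}{w} = \sum_{i=0}^{(b+1)^n - 1} \uno\!\left(e_n(u)[in+1, in+n] = w\right),
$$
so after summing over $u \in A^{\ell_n}$ and swapping the two finite sums it suffices to evaluate, for each fixed $i$,
$$
N_i = \sum_{u \in A^{\ell_n}} \uno\!\left(e_n(u)[in+1, in+n] = w\right),
$$
and to show that $\sum_i N_i = b^{\ell_n}$.

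The key step is to understand the $i$-th block of $e_n(u)$. I would write $c_i = w_n[in+1, in+n]$ for the $i$-th block of $w_n$; by the definition of $w_n$ these blocks run over all of $\wh{A}^n$ exactly once as $i$ ranges over $0, \ldots, (b+1)^n - 1$. By the construction of $e_n$, the $i$-th block of $e_n(u)$ carries the symbol $b$ precisely where $c_i$ does, while its remaining positions are filled, in order, by the segment $u[p_i+1, p_i+q_i]$, where $p_i = m(n, in)$ is the number of wildcards preceding the block and $q_i = \alocc{c_i^{\star}}{\star}$ is the number of wildcards inside it. In particular the wildcard pattern of this block is exactly $c_i^{\star}$, independent of $u$, and its reduction is exactly $u[p_i+1, p_i+q_i]$. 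Invoking Observation~\ref{igualdadext}, the event $e_n(u)[in+1,in+n] = w$ is therefore equivalent to the conjunction of the $u$-independent condition $c_i^{\star} = w^{\star}$ and the $u$-dependent condition $u[p_i+1, p_i+q_i] = r(w)$.

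It remains to count. Setting $q = \alocc{w^{\star}}{\star} = |r(w)|$, I observe that if $c_i^{\star} \ne w^{\star}$ then $N_i = 0$, whereas if $c_i^{\star} = w^{\star}$ then $q_i = q$ and the matching condition fixes $q$ consecutive symbols of $u$ while leaving the other $\ell_n - q$ symbols free, so Observation~\ref{cuentasubs} gives $N_i = b^{\ell_n - q}$. The number of indices $i$ with $c_i^{\star} = w^{\star}$ equals the number of aligned occurrences of $w^{\star}$ in $w_n^{\star}$, which Observation~\ref{appchamp} evaluates to $b^{\alocc{w^{\star}}{\star}} = b^q$. Summing over $i$ then yields
$$
\sum_{i=0}^{(b+1)^n - 1} N_i = b^q \cdot b^{\ell_n - q} = b^{\ell_n},
$$
as claimed.

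The main obstacle I anticipate is purely bookkeeping: one must verify that the offset $p_i$ into $u$ is well defined, that $p_i + q_i \le \ell_n$ so that Observation~\ref{cuentasubs} genuinely applies, and that the blocks $c_i$ really do enumerate $\wh{A}^n$ without repetition, which is exactly where the lexicographic definition of $w_n$ enters. Once the clean separation into a $u$-independent pattern condition and a $u$-dependent value condition is in place via Observation~\ref{igualdadext}, the two counts factor and multiply independently of $i$, and the exponents $q$ and $\ell_n - q$ cancel to give the stated value.
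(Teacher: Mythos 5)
Your proof is correct and follows essentially the same route as the paper's: both decompose $\alocc{e_n(u)}{w}$ block-by-block via Observation~\ref{obspartes}, split each block-match into a $u$-independent pattern condition and a $u$-dependent reduction condition via Observation~\ref{igualdadext}, and then multiply the count $b^{\alocc{w^{\star}}{\star}}$ of matching blocks (Observation~\ref{appchamp}) by the count $b^{\ell_n - |r(w)|}$ of compatible words $u$ (Observation~\ref{cuentasubs}). The only difference is cosmetic—you swap the two sums and evaluate per block index, while the paper fixes $u$ and restricts to the good index set $I$ first—and incidentally your block count $(b+1)^n$ corrects a harmless typo in the paper's summation limit.
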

\begin{proof}
By Observation~\ref{obspartes} we have  
$$
\alocc{e_n(u)}{w} = \sum_{i = 0}^{|u|/n-1} \uno(e_n(u)[in+1, in+n] = w)
$$
for all $u \in A^{\ell_n}$.
Applying Observation~\ref{igualdadext} we get 
$\alocc{e_n(u)}{w} $ is equal to
\begin{equation}\label{sumabetas}
\sum_{i = 0}^{|u|/n-1} \uno\Big((e_n(u)[in+1, in+n])^{\star} 
= w^{\star}\Big)\uno\Big(r(e_n(u)[in+1, in+n]) = r(w)\Big).
\end{equation}
Analyzing the definition of $({\star})$ we get that 
$$
(e_n(u)[in+1, in+n])^{\star} = 
 e_n(u)^{\star}[in+1, in+n] = 
 (w^{\star}_n)[in+1, in+n].
$$ 
By Observation~\ref{appchamp} we conclude that 
$$
\alocc{w^{\star}_n}{w^{\star}} = b^{\alocc{w^{\star}}{\star}} = b^{|w|-\alocc{w}{b}}.
$$ 
This means that there are exactly $b^{|w|-\alocc{w}{b}}$ terms of the sum in which 
$$
\uno\Big((e_n(u)[in+1, in+n])^{\star} = w^{\star}\Big) = 1.
$$ 
Let 
$$
I = \{0 \leq i < \ell_n/n: \uno((w^{\star}_n)[in+1, in+n] = w^{\star}) = 1 \}.
$$
 be the set of indexes where the first term of the product does not vanish. 
Notice that $I$ does not depend on $u$.

Analyzing the second term of the product, we observe that 
$$
\uno\Big(r(e_n(u)[in+1, in+n]\Big) = r(w)) = \uno\Big(u[m(n,in)+1, m(n,in+n)] = r(w)\Big).
$$ 
applying this we reduce (\ref{sumabetas}) to 
\begin{equation}\label{sumabetados}
\alocc{e_n(u)}{w} 
= \sum_{i \in I} \uno\Big(u[m(n,in)+1, m(n,in+n)] 
= r(w)\Big).
\end{equation}
%Notice that 
Since $i \in I$ we have that 
$(e_n(u)[in+1, in+n])^{\star} = w^{\star}$, which implies that 
$$
\left|u[m(n,in)+1, m(n,in+n)]| =|r(e_n(u)[in+1, in+n])\right| = |r(w)|.
$$
Summing (\ref{sumabetados}) over all $u \in A^{\ell_n}$ we get
$$
\sum_{u \in A^{\ell_n}} \alocc{e_n(u)}{w} 
= \sum_{u \in A^{\ell_n}} \sum_{i \in I} \uno(u[m(n,in)+1, m(n,in+n)] 
= r(w)).
$$
And applying Observation~\ref{cuentasubs} we get
$$
\sum_{u \in A^{\ell_n}} \alocc{e_n(u)}{w} 
= \sum_{i \in I} b^{\ell_n-|r(w)|} 
= b^{|w|-\alocc{w}{b}} b^{\ell_n-|r(w)|}.
$$
And noticing that by definition of $r$ we have that 
$|r(w)| = |w|-\alocc{w}{b}$ this gives us the desired result 
$$
\sum_{u \in A^{\ell_n}} \alocc{e_n(u)}{w} = b^{\ell_n}.
$$
\end{proof}

\subsection{On discrepancies}
Here we introduce a definition of discrepancy for finite words and 
we relate the discrepancy of a word and the discrepancy of  the expanded word.
We also consider the concatenation of  a sequence of words and 
we bound the discrepancy of the resulting word  in terms of the discrepancies of the individual words.
Most of the bounds that we give can be improved 
but these simple versions will be enough for the proof of Theorem~\ref{maintheo}. 

Given some alphabet $A$, some word $u \in A^*$ 
and a fixed length $\ell \in \NN$, for a word $v \in A^{\ell}$ 
 the frequency of aligned occurrences of $v$ in $u$ 
over all aligned substrings of length $\ell$ in $u$ is
$$
\frac{\alocc{u}{v}}{\lfloor|u|/\ell \rfloor}.
$$
We can measure how far is this frequency from the case where 
all words of length~$\ell$ are equiprobable by
$$
\left|\frac{\alocc{u}{v}}{\lfloor|u|/\ell \rfloor} - \frac{1}{|A|^{\ell}}\right|.
$$

The discrepancy  of a word $u$ in $A^* $ for a length $\ell$
is the maximum 
of this distance among all $v \in A^{\ell}$ and we denote it by $\Delta_{A,\ell}(u)$.

\begin{definition*} [Discrepancy of a finite word for a given length $\ell$]
$$
\Delta_{A,\ell}(u) = 
\max_{v \in A^{\ell}}\left(\left|\frac{\alocc{u}{v}}{\lfloor|u|/\ell \rfloor} - \frac{1}{|A|^{\ell}}\right|\right).
$$
\end{definition*} 

An easy equivalence is that $u$ is simply normal to length $\ell$ if and only if
$$
\lim_{n \rightarrow \infty} \Delta_{A,\ell}(u[1,n]) = 0.
$$
and therefore $u$ is normal if and only if this limit is valid for every length $\ell \in \NN$.

Let $u\in A^*$, let $\ell$ be a length and let $\eps$ be a real umber between $0$ and $1$.
Then  it follows that 
$$
\Delta_{A,\ell}(u) < \eps
$$
is equivalent to have for all $v \in A^{\ell}$,
$$
\lfloor|u|/\ell \rfloor\left(\frac{1}{|A|^{\ell}} - \eps\right) < 
\alocc{u}{v} < \lfloor|u|/\ell \rfloor\left(\frac{1}{|A|^{\ell}} + \eps\right).
$$

\begin{lemma}[Main Lemma]\label{fund}
For each $n \in \mathbb{N}$ there exists a constant 
$c_n \in \RR$ with $c_n > 0$ such that for every $\varepsilon > 0$ 
and every word $v \in (A^{\ell_n})^*$ if 
\begin{equation}\label{bounddisc}
\Delta_{A,\ell_n}(v) < \varepsilon
\end{equation}
 then 
$$
\Delta_{\wh{A},n}(e_n(v)) < c_n\varepsilon.
$$
\end{lemma}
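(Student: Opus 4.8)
The plan is to reduce the length-$n$ discrepancy of the expanded word $e_n(v)$ to the length-$\ell_n$ discrepancy of $v$ by reorganizing the aligned-occurrence count of a fixed block $w \in \wh{A}^n$ into partial sums of block-occurrence counts in $v$, to which the hypothesis~(\ref{bounddisc}) can be applied term by term. Write $v = v_1 v_2 \ldots v_K$ with each $v_k \in A^{\ell_n}$, so that $e_n(v) = e_n(v_1)\ldots e_n(v_K)$ and $|e_n(v)| = K\wh{\ell}_n = Kn(b+1)^n$. Since $\wh{\ell}_n$ is a multiple of $n$, the number of aligned length-$n$ substrings of $e_n(v)$ is exactly $K(b+1)^n$, and Observation~\ref{obspartes} yields $\alocc{e_n(v)}{w} = \sum_{k=1}^K \alocc{e_n(v_k)}{w}$. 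The ideal count for each $w$ is $K(b+1)^n\cdot\tfrac{1}{(b+1)^n} = K$, so it suffices to bound $|\alocc{e_n(v)}{w} - K|$ uniformly in $w$.

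First I would record, exactly as in the proof of Lemma~\ref{sumafund}, the per-block identity
$$
\alocc{e_n(v_k)}{w} = \sum_{i \in I} \uno\big(v_k[m(n,in)+1,\, m(n,in+n)] = r(w)\big),
$$
where $I = \{\,0 \le i < \wh{\ell}_n/n : (w^{\star}_n)[in+1,in+n] = w^{\star}\,\}$ is independent of $k$, has $|I| = b^{\,n-\alocc{w}{b}}$ by Observation~\ref{appchamp}, and where for every $i \in I$ the indicated window of $v_k$ has the fixed length $\rho := |r(w)| = n - \alocc{w}{b}$. Summing over $k$ and swapping the two summations gives $\alocc{e_n(v)}{w} = \sum_{i \in I} N_i$ with $N_i = \sum_{k=1}^K \uno\big(v_k[p_i+1,\,p_i+\rho] = r(w)\big)$ for the offset $p_i = m(n,in)$.

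The key step is to recognize each inner sum as a partial sum over whole blocks: grouping the blocks $v_k$ by their value $u \in A^{\ell_n}$ and using that $u$ occurs $\alocc{v}{u}$ times aligned in $v$, I obtain $N_i = \sum_{u \in A^{\ell_n},\, u[p_i+1,p_i+\rho]=r(w)} \alocc{v}{u}$. Now the hypothesis $\Delta_{A,\ell_n}(v) < \varepsilon$ yields $|\alocc{v}{u} - K/b^{\ell_n}| < \varepsilon K$ for every $u$, while Observation~\ref{cuentasubs} says exactly $b^{\,\ell_n-\rho}$ words $u$ satisfy the window constraint; hence by the triangle inequality $|N_i - K/b^{\rho}| < b^{\,\ell_n-\rho}\varepsilon K$. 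Summing over the $|I| = b^{\rho}$ indices gives $|\alocc{e_n(v)}{w} - K| < b^{\ell_n}\varepsilon K$, so dividing by $K(b+1)^n$ and maximizing over $w$ produces $\Delta_{\wh{A},n}(e_n(v)) < \tfrac{b^{\ell_n}}{(b+1)^n}\varepsilon$. Thus $c_n = b^{\ell_n}/(b+1)^n$ works.

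The main obstacle is conceptual rather than computational: the hypothesis controls discrepancy at the long length $\ell_n$ (entire blocks), whereas the number of aligned copies of $w$ in $e_n(v)$ depends on $v$ only through the short windows $r(w)$ of length $\rho \le n$ sitting at several fixed offsets inside the blocks. The crux is the reordering that turns the sum over these short windows into a sum over block-aligned occurrences of full length-$\ell_n$ words, where the hypothesis applies directly; combined with the exact counting in Observation~\ref{cuentasubs}, this transfers length-$\ell_n$ control in $A$ into length-$n$ control in $\wh{A}$, at the price of the dimension-counting factor $b^{\ell_n}/(b+1)^n$.
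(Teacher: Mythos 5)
Your proof is correct and takes essentially the same route as the paper's: the paper's key identity $\alocc{e_n(v)}{w} = \sum_{u \in A^{\ell_n}} \alocc{v}{u}\,\alocc{e_n(u)}{w}$ is exactly your double sum with the order of summation over positions $i \in I$ and block values $u$ interchanged, and your exact count $|I|\cdot b^{\ell_n-\rho} = b^{\ell_n}$ is precisely the paper's key input, Lemma~\ref{sumafund}, re-derived inline via Observations~\ref{appchamp} and~\ref{cuentasubs}. You arrive at the identical constant $c_n = b^{\ell_n}/(b+1)^n$.
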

\begin{proof}
Let $w \in \wh{A}^n$ be any word of length $n$,
 then 
$$
\alocc{e_n(v)}{w} = \sum_{\wh{u} \in \wh{A}^{\wh{\ell}_n}} \alocc{e_n(v)}{\wh{u}} \alocc{\wh{u}}{w}.
$$
By the definition of $e_n$, the blocks of length $\wh{\ell}_n$ of $e_n(v)$ 
are of the form $e_n(v_i)$ for some $v_i \in A^{\ell_n}$. Then, the only non-zero terms of the sum can be the ones where $\wh{u}$ is in the image of $e_n$, and since $e_n$ is injective we can change the sum to iterate over the $e_n(u)$ for $u \in A^{\ell_n}$. It follows that 
$$
\alocc{e_n(v)}{w} = \sum_{u \in A^{\ell_n}} \alocc{e_n(v)}{e_n(u)} \alocc{e_n(u)}{w}.
$$
By Observation~\ref{appext} it reduces to
$$\alocc{e_n(v)}{w} = \sum_{u \in A^{\ell_n}} \alocc{v}{u} \alocc{e_n(u)}{w}.
$$
Applying (\ref{bounddisc}) we get
$$\alocc{e_n(v)}{w} < \sum_{u \in A^{\ell_n}} \frac{|v|}{|u|}\left(\frac{1}{b^{|u|}} + \eps \right) \alocc{e_n(u)}{w} = \frac{|v|}{\ell_n}\left(\frac{1}{b^{\ell_n}} + \eps \right) \left(\sum_{u \in A^{\ell_n}} \alocc{e_n(u)}{w}\right).
$$
Using Observation~\ref{sumafund} we get
$$\alocc{e_n(v)}{w} < \frac{|v|}{\ell_n}\left(\frac{1}{b^{\ell_n}} + \eps \right) b^{\ell_n} = \frac{|v|}{\ell_n}\left(1 + b^{\ell_n}\eps \right).
$$
Multiplying by $\frac{|w|}{|e_n(v)|} = \frac{n}{|e_n(v)|}$ on both sides % give us 
we obtain
\begin{equation}
\label{venv}\frac{|w|}{|e_n(v)|}\alocc{e_n(v)}{w} < \frac{n|v|}{\ell_n |e_n(v)|}\left(1 + b^{\ell_n}\eps \right).
\end{equation}
Since $v \in (A^{\ell_n})^*$ we can write $v$ as 
$$
v = v_1v_2\ldots v_t
$$ 
where each $v_i$ satisfies $|v_i| = \ell_n$. 
Then $|v| = t \ell_n$ and 
$$
e_n(v) = e_n(v_1)e_n(v_2)\ldots e_n(v_t)
$$
 where $|e_n(v_i)| = \wh{\ell}_n$ for all 
$1 \leq i \leq t$. So, we conclude that $|e_n(v)| = t\wh{\ell}_n$.

Using this on (\ref{venv}) we get
$$
\frac{|w|}{|e_n(v)|}\alocc{e_n(v)}{w} 
< 
\frac{n t \ell_n}{\ell_n t \wh{\ell}_n}\left(1 + b^{\ell_n}\eps \right)
$$
using Observation~\ref{valorell} we can replace the value of $\wh{\ell}_n$ and get
$$
\frac{|w|}{|e_n(v)|}\alocc{e_n(v)}{w} < \frac{n}{n (b+1)^n}\left(1 + b^{\ell_n}\eps \right) 
= \frac{1}{(b+1)^n} + \frac{b^{\ell_n}}{(b+1)^n}\eps.
$$
By a similar argument we get the inequality
$$
\frac{|w|}{|e_n(v)|}\alocc{e_n(v)}{w} > \frac{1}{(b+1)^n} - \frac{b^{\ell_n}}{(b+1)^n}\eps.
$$
These two inequalities imply that 
$$
\Delta_{\wh{A},n}(e_n(v)) < \frac{b^{\ell_n}}{(b+1)^n}\eps.
$$
The desired result follows taking 
\[
c_n = \frac{b^{\ell_n}}{(b+1)^n}.
\]
\end{proof}

\subsection{Some other useful results}

\begin{proposition}\label{mitad}
Given a finite alphabet $C$ with $|C| = k$ and some $m, n \in \NN$. 
We have that for each word $v \in (C^{mn})^*$ and $\eps \in \RR$ with $\eps > 0$ such that 
\begin{equation}\label{discmitad}\Delta_{C, mn}(v) < \eps\end{equation} 
then 
$$
\Delta_{C, n}(v) < k^{(m-1)n}\eps.
$$
\end{proposition}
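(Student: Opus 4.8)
The plan is to mirror the counting argument of Lemma~\ref{fund}, exploiting that the aligned length-$n$ blocks of $v$ refine its aligned length-$mn$ blocks. Since $v \in (C^{mn})^*$, its length is an exact multiple of $mn$; writing $\lfloor |v|/(mn) \rfloor = t$, we have $|v| = tmn$ and hence $\lfloor |v|/n \rfloor = tm$ with no rounding. First I would fix a target word $w' \in C^n$ and observe that every aligned occurrence of $w'$ sits inside exactly one aligned length-$mn$ block of $v$, at one of the $m$ internal aligned positions $1, n+1, \ldots, (m-1)n+1$. This yields the refinement identity
$$
\alocc{v}{w'} = \sum_{w \in C^{mn}} \alocc{v}{w}\, \alocc{w}{w'},
$$
where $\alocc{w}{w'} = \sum_{i=0}^{m-1} \uno(w[in+1, in+n] = w')$ by Observation~\ref{obspartes} applied to the single block $w \in (C^n)^*$.

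Next I would feed in the hypothesis $\Delta_{C,mn}(v) < \eps$, which bounds each $\alocc{v}{w}$ between $t(1/k^{mn} - \eps)$ and $t(1/k^{mn} + \eps)$. Substituting the upper bound and pulling the common factor out of the sum reduces the problem to evaluating $\sum_{w \in C^{mn}} \alocc{w}{w'}$. Swapping the order of summation and applying Observation~\ref{cuentasubs} for each of the $m$ positions $i \in \{0, n, \ldots, (m-1)n\}$ gives $\sum_{w \in C^{mn}} \uno(w[in+1, in+n] = w') = k^{mn-n}$, so that $\sum_{w \in C^{mn}} \alocc{w}{w'} = m\, k^{(m-1)n}$.

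Combining these, $\alocc{v}{w'} < t(1/k^{mn} + \eps)\, m\, k^{(m-1)n}$. Dividing by $\lfloor |v|/n \rfloor = tm$ and simplifying $k^{(m-1)n}/k^{mn} = 1/k^n$ yields $\alocc{v}{w'}/(tm) - 1/k^n < k^{(m-1)n}\eps$; the symmetric computation starting from the lower bound on $\alocc{v}{w}$ gives the matching lower estimate. Taking absolute values and maximizing over $w' \in C^n$ then delivers $\Delta_{C,n}(v) < k^{(m-1)n}\eps$, as required.

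I expect the only delicate point to be the bookkeeping in the refinement identity: one must verify that the aligned length-$n$ positions of $v$ are precisely the disjoint union, over the aligned length-$mn$ blocks, of the $m$ aligned length-$n$ positions inside each block, and that this uses $|v|$ being a multiple of $mn$ so that no floor discrepancy appears between $\lfloor |v|/n \rfloor$ and $m \lfloor |v|/(mn) \rfloor$. Everything else is the same kind of elementary counting already carried out for Lemma~\ref{fund}.
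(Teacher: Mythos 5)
Your proposal is correct and follows essentially the same route as the paper's own proof: the refinement identity $\alocc{v}{w'} = \sum_{w \in C^{mn}} \alocc{v}{w}\,\alocc{w}{w'}$, the discrepancy hypothesis applied to each $\alocc{v}{w}$, and then Observations~\ref{obspartes} and~\ref{cuentasubs} to evaluate $\sum_{w \in C^{mn}} \alocc{w}{w'} = m\,k^{(m-1)n}$. If anything, your write-up is slightly more complete, since you carry out the symmetric lower bound and the floor bookkeeping explicitly, whereas the paper stops after the upper estimate.
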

\begin{proof}
Let $w \in C^n$ be any word of length $n$. We have that 
$$
\alocc{v}{w} = \sum_{u \in C^{mn}} \alocc{v}{u} \alocc{u}{w}.
$$ 
Using \ref{discmitad} we get 
$$
\alocc{v}{w} < \sum_{u \in C^{mn}} \frac{|v|}{|u|}\left(\frac{1}{k^{mn}} + \eps\right) \alocc{u}{w}.
$$
Using Observation~\ref{obspartes} we get 
$$
\alocc{v}{w} <  \frac{|v|}{mn}\left(\frac{1}{k^{mn}} + \eps\right) \sum_{u \in C^{mn}} \sum_{i = 0}^m \uno(u[in+1, in+n] = v).
$$ 
Using Observation~\ref{cuentasubs} we get 
$$
\alocc{v}{w} < \frac{|v|}{mn}\left(\frac{1}{k^{mn}} + \eps\right) \sum_{i = 0}^m k^{mn-n} = \frac{|v|}{n}\left(\frac{1}{k^{n}} + k^{(m-1)n}\eps\right).
$$
\end{proof}

\begin{proposition}\label{sufijo}
Given a finite alphabet $C$, some $n \in \NN$ and $u,v \in (C^n)^*$, if 
\begin{equation}\label{sufdiscu}\Delta_{C, n}(u) < \eps\end{equation}
 and 
\begin{equation}\label{sufdiscuv}\Delta_{C, n}(uv) < \eps\end{equation}
 then 
$$
\Delta_{C, n}(v) < \frac{|uv|+|u|}{|v|}\eps.
$$
\end{proposition}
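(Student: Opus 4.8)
The plan is to exploit the additivity of aligned occurrences across the concatenation point. Since both $u$ and $v$ lie in $(C^n)^*$, the length $|u|$ is a multiple of $n$, so when we split $uv$ into consecutive aligned blocks of length $n$, the blocks coming from $u$ and the blocks coming from $v$ do not interfere: no block straddles the boundary between $u$ and $v$. Consequently, for every $w \in C^n$ we have the identity $\alocc{uv}{w} = \alocc{u}{w} + \alocc{v}{w}$, which I would rearrange as $\alocc{v}{w} = \alocc{uv}{w} - \alocc{u}{w}$. Moreover, because all three words have length divisible by $n$, the floors in the definition of discrepancy are exact; writing $a = |u|/n$ and $c = |v|/n$ the relevant denominators are $a$, $c$ and $a+c$ respectively.

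Next I would invoke the two-sided reformulation of the discrepancy bound recorded just before the statement: hypothesis~(\ref{sufdiscu}) gives, for every $w \in C^n$,
$$
a\left(\frac{1}{k^n} - \eps\right) < \alocc{u}{w} < a\left(\frac{1}{k^n} + \eps\right),
$$
while hypothesis~(\ref{sufdiscuv}) gives
$$
(a+c)\left(\frac{1}{k^n} - \eps\right) < \alocc{uv}{w} < (a+c)\left(\frac{1}{k^n} + \eps\right).
$$
Substituting these into $\alocc{v}{w} = \alocc{uv}{w} - \alocc{u}{w}$ — pairing the upper bound for $\alocc{uv}{w}$ with the lower bound for $\alocc{u}{w}$ to bound $\alocc{v}{w}$ from above, and symmetrically for the lower bound — the main terms $\tfrac{1}{k^n}$ telescope and the error terms add, leaving
$$
\left|\alocc{v}{w} - \frac{c}{k^n}\right| < (2a + c)\eps.
$$

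Finally I would divide through by $c = |v|/n$ to obtain $\left|\frac{\alocc{v}{w}}{c} - \frac{1}{k^n}\right| < \frac{2a+c}{c}\eps$, and then simplify the coefficient: $\frac{2a+c}{c} = \frac{2|u|+|v|}{|v|} = \frac{|uv|+|u|}{|v|}$, using $|uv| = |u|+|v|$. Since this estimate is uniform in $w$, taking the maximum over $w \in C^n$ yields $\Delta_{C,n}(v) < \frac{|uv|+|u|}{|v|}\eps$, as required. I do not expect a genuine obstacle here; the only step meriting explicit justification is the clean additivity $\alocc{uv}{w} = \alocc{u}{w} + \alocc{v}{w}$, which is precisely where the hypothesis $u,v \in (C^n)^*$ is used — if $|u|$ were not a multiple of $n$, the alignment grids of $u$ and of $uv$ would be offset and the identity would fail.
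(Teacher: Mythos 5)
Your proposal is correct and follows essentially the same route as the paper: the identity $\alocc{v}{w} = \alocc{uv}{w} - \alocc{u}{w}$, pairing the upper bound for $uv$ with the lower bound for $u$ (and vice versa), then dividing by $|v|/n$ and simplifying $\frac{2|u|+|v|}{|v|} = \frac{|uv|+|u|}{|v|}$. The only difference is cosmetic: you make explicit the alignment argument justifying the additivity of $\alocc{\cdot}{w}$ across the concatenation point, which the paper states without comment.
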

\begin{proof}
Let $w \in C^n$ be any word of length $n$. Then, 
$$
\alocc{v}{w} = \alocc{uv}{w} - \alocc{u}{w}.
$$ 
Using (\ref{sufdiscu}) and (\ref{sufdiscuv}) we get 
$$
\alocc{v}{w} < \frac{|uv|}{|w|}\left(\frac{1}{k^{|w|}} +
 \eps\right) - \frac{|u|}{|w|}\left(\frac{1}{k^{|w|}} - \eps\right)
$$
 which using $|uv| = |u| + |v|$ is equivalent to  
$$
\alocc{v}{w} <\frac{|v|}{|w|}\frac{1}{k^{|w|}} + \frac{|uv|+|u|}{|w|}\eps
$$
 which is equivalent to 
$$
\alocc{v}{w} < \frac{|v|}{|w|}\left(\frac{1}{k^{|w|}} + \frac{|uv|+|u|}{|v|}\eps\right).
$$ 
In a similar way we can conclude 
$$
\alocc{v}{w} > \frac{|v|}{|w|}\left(\frac{1}{k^{|w|}} - \frac{|uv|+|u|}{|v|}\eps\right).
$$
Since both inequalities are valid for all $w \in C^n$ we conclude the result.
\end{proof}

\begin{proposition}\label{concat}
Given a finite alphabet $C$, some $n \in \NN$ and $u,v \in (C^n)^*$, if 
\begin{equation}\label{concatdiscu}\Delta_{C, n}(u) < \eps\end{equation}
 and 
\begin{equation}\label{concatdiscv}\Delta_{C, n}(v) < \frac{|uv|+|u|}{|v|}\eps\end{equation}
 then 
$$
\Delta_{C, n}(uv) < 3\eps.
$$
\end{proposition}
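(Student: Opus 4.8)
The plan is to fix an arbitrary $w \in C^n$ and to bound $\alocc{uv}{w}$ both from above and below, and then take the maximum over $w$ at the end. The structural fact that makes everything work is that, because both $u$ and $v$ lie in $(C^n)^*$, their lengths are multiples of $n$, so the aligned block boundaries of the concatenation $uv$ are exactly those of $u$ followed by those of $v$. This yields the exact additivity
$$
\alocc{uv}{w} = \alocc{u}{w} + \alocc{v}{w},
$$
exactly as in the first line of the proof of Proposition~\ref{sufijo}. In particular $\lfloor|u|/n\rfloor = |u|/n$, $\lfloor|v|/n\rfloor = |v|/n$ and $\lfloor|uv|/n\rfloor = |uv|/n$, so no floor corrections ever intervene.

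Next I would unfold the two discrepancy hypotheses into two-sided bounds on occurrence counts, using the equivalence recorded just before Lemma~\ref{fund}. Writing $k = |C|$, hypothesis~(\ref{concatdiscu}) gives
$$
\frac{|u|}{n}\Big(\frac{1}{k^n}-\eps\Big) < \alocc{u}{w} < \frac{|u|}{n}\Big(\frac{1}{k^n}+\eps\Big),
$$
while hypothesis~(\ref{concatdiscv}), with the inflated error $\eps' = \frac{|uv|+|u|}{|v|}\eps$, gives
$$
\frac{|v|}{n}\Big(\frac{1}{k^n}-\eps'\Big) < \alocc{v}{w} < \frac{|v|}{n}\Big(\frac{1}{k^n}+\eps'\Big).
$$
Adding the two upper bounds and using the crucial cancellation $\frac{|v|}{n}\eps' = \frac{|uv|+|u|}{n}\eps$ collapses the main terms to $\frac{|uv|}{n}\frac{1}{k^n}$ and leaves the error term $\frac{2|u|+|uv|}{n}\eps$; the identical computation on the lower bounds yields the symmetric statement.

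The only remaining point, and the place where the constant $3$ is born, is to absorb this error term into the clean form. Since $|uv| = |u|+|v| \geq |u|$ we have $2|u| + |uv| \leq 3|uv|$, and therefore
$$
\alocc{uv}{w} < \frac{|uv|}{n}\Big(\frac{1}{k^n} + 3\eps\Big) \quad\text{and}\quad \alocc{uv}{w} > \frac{|uv|}{n}\Big(\frac{1}{k^n} - 3\eps\Big).
$$
Because these bounds hold for every $w \in C^n$, rewriting them as $\big|\alocc{uv}{w}/(|uv|/n) - k^{-n}\big| < 3\eps$ and taking the maximum over $w$ gives $\Delta_{C,n}(uv) < 3\eps$.

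I do not expect a serious obstacle here: the whole argument is a routine two-sided estimate, and the single idea that drives it is that the error inflation in~(\ref{concatdiscv}) has been chosen precisely so that the weight $\frac{|v|}{n}\eps'$ matches the combinatorial factor $\frac{|uv|+|u|}{n}$ produced by the additivity step. This is exactly the output quantity appearing in the conclusion of Proposition~\ref{sufijo}, so the two propositions are set up as exact converses of one another; the mild care needed is only to track that every occurrence count is weighted by $|uv|/n$ rather than $|u|/n$ or $|v|/n$ when comparing against $k^{-n}$.
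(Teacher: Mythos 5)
Your proposal is correct and is essentially the paper's own proof: both rest on the exact additivity $\alocc{uv}{w} = \alocc{u}{w} + \alocc{v}{w}$ of aligned counts, the cancellation built into the inflated error term of (\ref{concatdiscv}), and the absorption $2|u|+|uv| = 3|u|+|v| \le 3|uv|$ that produces the constant $3$, followed by the symmetric lower bound and taking the maximum over $w \in C^n$. The differences are purely cosmetic (writing $n$ in place of $|w|$, and remarking that Proposition~\ref{concat} is the converse of Proposition~\ref{sufijo}).
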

\begin{proof}
Let $w \in C^n$ be any word of length $n$. Then, 
$$
\alocc{uv}{w} = \alocc{u}{w} + \alocc{v}{w}.
$$ 
Using (\ref{concatdiscu}) and (\ref{concatdiscv}) we get 
$$
\alocc{uv}{w} < \frac{|u|}{|w|}\left(\frac{1}{k^{|w|}} + \eps\right) + \frac{|v|}{|w|}\left(\frac{1}{k^{|w|}} + \frac{|uv|+|u|}{|v|}\eps\right)
$$
 which using $|uv| = |u| + |v|$ is equivalent to 
$$
\alocc{uv}{w} <\frac{|u|+|v|}{|w|}\frac{1}{k^{|w|}} + \frac{3|u|+|v|}{|w|}\eps,
$$
 and since $3|u|+|v| < 3(|u|+|v|)$ we get 
$$
\alocc{uv}{w} < \frac{|u|+|v|}{|w|}\left(\frac{1}{k^{|w|}} + 3\eps\right).
$$ 
In a similar way we can conclude 
$$
\alocc{uv}{w} > \frac{|u|+|v|}{|w|}\left(\frac{1}{k^{|w|}} - 3\eps\right).
$$
Since both inequalities are valid for all $w \in C^n$ we conclude the result.
\end{proof}

%\begin{proposition}\label{concatfuerte}
%Given a finite alphabet $C$, some $n \in \NN$ and $u,v \in (C^n)^*$, if 
%\begin{equation}\label{concatfdiscu}\Delta_{C, n}(u) < \eps\end{equation}
 %and 
%\begin{equation}\label{concatfdiscv}\Delta_{C, n}(v) < \eps\end{equation}
 %then 
%$$\Delta_{C, n}(uv) < \eps$$
%\end{proposition}
%\begin{proof}
%Let $w \in C^n$ be any word of length $n$. Then, 
%$$||uv||_w = ||u||_w + ||v||_w$$ 
%Using \ref{concatfdiscu} and \ref{concatfdiscv} we get 
%$$||uv||_w < \frac{|u|}{|w|}\left(\frac{1}{k^{|w|}} + \eps\right) + \frac{|v|}{|w|}\left(\frac{1}{k^{|w|}} + \eps\right)$$
 %which using $|uv| = |u| + |v|$ is equivalent to 
%$$||uv||_w < \frac{|uv|}{|w|}\left(\frac{1}{k^{|w|}} + \eps\right)$$
%In a similar way we can conclude 
%$$||uv||_w > \frac{|uv|}{|w|}\left(\frac{1}{k^{|w|}} - \eps\right)$$
%Since both inequalities are valid for all $w \in C^n$ we conclude the result.
%\end{proof}

Our analysis so far focuses in aligned occurrences of a given word  in an expanded word.
For a technical reason 
the proof of Theorem~\ref{maintheo} 
 needs to consider  the number of non-aligned occurrences of  any given word in the 
constructed expanded word.
We define the number of non-aligned occurrences of a word $v$ in a word $u$ as
$$
\occ{u}{v} = |\{i \leq |u|-|v|+1: u[i,i+|v|-1] = v\}| 
$$

Notice that for every symbol $b\in A$ and for every word $u\in A^* $,
$$
\occ{u}{b} =\alocc{u}{b}.
$$

The following  proposition gives the needed result.

\begin{proposition}\label{alinanoalin}
Given a finite alphabet $C$, some $n, m \in \NN$ with $m < n$ some $u \in (C^n)^*$ and $v \in C^m$, if 
\begin{equation}\label{alindisc}\Delta_{C, n}(u) < \eps\end{equation}
 then 
$$
\occ{u}{v} < |u| \left(\frac{m-1}{n} + \frac{1}{|C|^m} + |C|^{n}\eps\right) - (m-1).
$$
\end{proposition}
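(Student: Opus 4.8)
The plan is to classify the occurrences of $v$ in $u$ according to the block structure coming from $u \in (C^n)^*$. Writing $u = u_1 u_2 \ldots u_t$ with each $|u_j| = n$ and $t = |u|/n$, every occurrence of $v$ counted by $\occ{u}{v}$ either lies entirely inside a single block $u_j$ or straddles the boundary between two consecutive blocks $u_j$ and $u_{j+1}$. First I would bound these two contributions separately and then add them. A preliminary sanity check I would record is that the within-block offsets (there are $n-m+1$ of them per block) together with the straddling start positions (at most $m-1$ at each of the $t-1$ internal boundaries) partition all $|u|-m+1$ admissible start positions exactly, since $t(n-m+1) + (t-1)(m-1) = |u|-m+1$; this guarantees no occurrence is omitted or double-counted.

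For the straddling occurrences the bound is purely combinatorial and uses no discrepancy hypothesis: an occurrence crossing the boundary after block $u_j$ must start at one of the last $m-1$ positions of $u_j$, and the last block contributes none (such a start would overflow $u$), so each of the $t-1$ internal boundaries is crossed by at most $m-1$ occurrences. This contributes at most $(t-1)(m-1) = \frac{(m-1)|u|}{n} - (m-1)$, which I expect to reproduce exactly the terms $\frac{(m-1)|u|}{n}$ and $-(m-1)$ of the target inequality.

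For the within-block occurrences I would sort them by their offset $p$ inside the block, with $1 \le p \le n-m+1$. For a fixed $p$, the number of blocks $u_j$ with $u_j[p,p+m-1] = v$ equals $\sum_w \alocc{u}{w}$, where $w$ ranges over the $|C|^{n-m}$ words of $C^n$ that carry $v$ at offset $p$ (this rewriting is exactly Observation~\ref{obspartes} applied block by block). Here I would invoke the hypothesis $\Delta_{C,n}(u) < \eps$; since $|u|$ is a multiple of $n$ we have $\lfloor |u|/n\rfloor = t$, so the discrepancy bound gives $\alocc{u}{w} < t\left(\frac{1}{|C|^n} + \eps\right)$ for every $w \in C^n$. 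Summing over the $|C|^{n-m}$ admissible $w$ yields the per-offset estimate $t\left(\frac{1}{|C|^m} + |C|^{n-m}\eps\right)$, and summing over the $n-m+1$ offsets bounds the within-block contribution by $(n-m+1)\,t\left(\frac{1}{|C|^m} + |C|^{n-m}\eps\right)$.

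Finally I would combine the two contributions and simplify with the crude estimates $n-m+1 \le n$ and $|C|^{n-m} \le |C|^n$, which turn $(n-m+1)\,t\cdot\frac{1}{|C|^m}$ into $\frac{|u|}{|C|^m}$ and $(n-m+1)\,t\cdot|C|^{n-m}\eps$ into $|u|\,|C|^n\eps$; adding the straddling term then gives precisely $|u|\left(\frac{m-1}{n} + \frac{1}{|C|^m} + |C|^n\eps\right) - (m-1)$. The step I expect to be most delicate is not the use of the discrepancy input, which enters in a single clean line, but the bookkeeping around the constant term: I must confirm the exact partition of start positions so that the straddling count lands exactly on $-(m-1)$, and I must make sure the strict inequality coming from $\Delta_{C,n}(u)<\eps$ survives the summation (it does, as $|C|^{n-m}\ge 1$) so that the final bound is strict.
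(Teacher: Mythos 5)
Your proof is correct and takes essentially the same route as the paper: split the occurrences of $v$ into those contained in a single block of length $n$ and those straddling a block boundary, bound the straddling ones by $(t-1)(m-1)$ purely combinatorially, and bound the within-block ones by applying the discrepancy hypothesis to the aligned counts $\alocc{u}{w}$ and counting the $|C|^{n-m}$ words of $C^n$ carrying $v$ at a fixed offset. The only differences are bookkeeping (you sum over offsets first, the paper over block contents $w$ first, then over offsets), and your count of $n-m+1$ offsets per block is in fact slightly more careful than the paper's $n-m$; both versions are absorbed by the slack in the stated bound.
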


\begin{proof}
For every pair of consecutive blocks of length $n$ in $u$ there 
are exactly $m-1$ substrings of length $m$ that are not fully
 contained in one of these blocks. Since there are $|u|/n$ blocks of length $n$ in $u$, 
there are $(|u|/n-1)(m-1)$ substrings of length $m$ not fully contained in one of the blocks. 
This gives us the following bound on the number of appearances of $v$ in $u$:
\begin{align*}
\occ{u}{v}
&\leq
 (|u|/n-1)(m-1) + \sum_{i=0}^{|u|/n-1} \occ{u[in+1, in+n]}{v} 
\\
&= (|u|/n-1)(m-1) + \sum_{w \in C^n} \alocc{u}{w} \occ{w}{v}.
\end{align*}
Using (\ref{alindisc}) we get,
$$
\occ{u}{v} < (|u|/n-1)(m-1) + \sum_{w \in C^n} \frac{|u|}{|v|}\left(\frac{1}{|C|^n} + \eps\right) \occ{w}{v}.
$$
Using that $\occ{w}{v} = \sum_{i=1}^{|w|-|v|} \uno(w[i,i+|v|] = v)$ we get,
$$
\occ{u}{v} < (|u|/n-1)(m-1) + \frac{|u|}{|v|}\left(\frac{1}{|C|^n} +
 \eps\right) \sum_{w \in C^n} \sum_{i=1}^{n-m} \uno(w[i,i+|v|] = v).
$$
Using Observation~\ref{cuentasubs} we get,
$$
\occ{u}{v} < (|u|/n-1)(m-1) + \frac{|u|}{|v|}\left(\frac{1}{|C|^n} + \eps\right) \sum_{i=1}^{n-m} |C|^{n-m}.
$$
Which is equivalent to 
$$
\occ{u}{v} < (|u|/n-1)(m-1) + \frac{|u|}{n}\left(\frac{1}{|C|^m} + |C|^{n-m}\eps\right)(n-m).
$$
And since $m<n$ we get,
$$
\occ{u}{v} < |u| \left(\frac{m-1}{n} + \frac{1}{|C|^m} + |C|^{n}\eps\right) - (m-1),
$$
as desired.
\end{proof}

The first paragraph in the proof above yields the following result.
\begin{observation}\label{concatapp}
Given a finite alphabet $C$, some $u, v, w \in C^*$ then $$\occ{uv}{w} \leq \occ{u}{w} + \occ{v}{w} + |w|-1.
$$
\end{observation}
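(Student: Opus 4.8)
The plan is to classify each occurrence of $w$ inside the concatenation $uv$ according to the position of the matched block relative to the boundary between $u$ and $v$, and then to count the three resulting classes separately. Write $m = |w|$. An occurrence of $w$ in $uv$ is a starting index $i$ with $1 \le i \le |uv| - m + 1$ and $(uv)[i, i+m-1] = w$; I would split the admissible indices into those with $i + m - 1 \le |u|$ (the block lies entirely inside $u$), those with $i \ge |u| + 1$ (the block lies entirely inside $v$), and the remaining \emph{straddling} indices with $|u| - m + 2 \le i \le |u|$ (the block uses symbols from both $u$ and $v$).

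For the first class, the defining condition $(uv)[i, i+m-1] = w$ coincides with $u[i, i+m-1] = w$, so these occurrences are counted exactly by $\occ{u}{w}$. For the second class, substituting $j = i - |u|$ identifies a starting index $i \ge |u|+1$ with the index $j$ in $v$, and $(uv)[i, i+m-1] = v[j, j+m-1]$, so these occurrences are counted exactly by $\occ{v}{w}$. This reduces the problem to bounding the number of straddling occurrences.

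For the straddling class, the admissible starting indices are precisely those in the range $|u| - m + 2 \le i \le |u|$, which contains at most $m - 1 = |w| - 1$ values (exactly $|w|-1$ when $|u| \ge |w|-1$, and fewer otherwise); this is the same elementary count of boundary substrings used in the first paragraph of the proof of Proposition~\ref{alinanoalin}. Since each index contributes at most one occurrence, the number of straddling occurrences is at most $|w| - 1$. Adding the three bounds yields
\[
\occ{uv}{w} \le \occ{u}{w} + \occ{v}{w} + (|w| - 1),
\]
as claimed. The argument is a direct counting, so there is no real obstacle; the only point requiring mild care is the degenerate situation where $u$ or $v$ is shorter than $w$, but since we only assert an upper bound, a truncated (or empty) range of straddling indices only strengthens the inequality.
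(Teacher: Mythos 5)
Your proof is correct and follows essentially the same route as the paper, which derives this observation from the boundary-counting argument in the first paragraph of the proof of Proposition~\ref{alinanoalin}: occurrences of $w$ lying entirely inside $u$ or inside $v$ are counted by $\occ{u}{w}$ and $\occ{v}{w}$, and at most $|w|-1$ starting positions can straddle the boundary. Your explicit three-way classification of starting indices, including the remark on the degenerate case where $u$ or $v$ is shorter than $w$, is just a more detailed write-up of that same argument.
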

Finally we recall the characterization of normality  that is seemingly easier 
than the actual definition, because instead of
asking for the limit  it asks for the limsup.

\begin{lemma}[Hot Spot Lemma, Piatetski-Shapiro 1951]%, Borwein and Bailey 2008]
\label{hotspot}
Let $x$ be an infinite word of symbols in alphabet $A$. 
Then, $x$ is normal if and only if there is positive constant~$C$ such that for all lengths~$\ell$
and  for every word $u$ of length $\ell$, 
$$
\limsup_{n\rightarrow\infty}   \frac{\occ{x[1,n]}{u}}{n}  < \frac{C}{|A|^\ell}.
$$
\end{lemma}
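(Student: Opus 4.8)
The plan is to recast both directions through empirical distributions and the ergodicity of the uniform measure, which turns the statement into a clean dichotomy. I would work in the shift space $A^{\omega}$ with the left shift $T$, and attach to $x$ the empirical measures $\mu_n=\frac1n\sum_{i=0}^{n-1}\delta_{T^i x}$. For a word $u$ of length $\ell$, writing $[u]$ for the cylinder of sequences beginning with $u$, one has $\mu_n([u])=\occ{x[1,n]}{u}/n+O(\ell/n)$, so the quantity in the statement is exactly $\limsup_n\mu_n([u])$. Throughout I use the classical fact (equivalent to the aligned definition given above; see \cite{bugeaud2012distribution}) that $x$ is normal if and only if $\mu_n\to\lambda$ in the weak-$*$ topology, where $\lambda$ is the uniform Bernoulli measure, for which $\lambda([u])=|A|^{-\ell}$.

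For the forward implication I would argue that if $x$ is normal then $\mu_n([u])\to\lambda([u])=|A|^{-\ell}$ for every block $u$, so $\limsup_n \occ{x[1,n]}{u}/n=|A|^{-\ell}<2\,|A|^{-\ell}$; the single constant $C=2$ then witnesses the criterion simultaneously for all lengths $\ell$ and all $u$. This direction is routine once normality is read off the empirical measures.

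The substance is the converse. Assume the uniform bound with some $C>0$. Since $A^{\omega}$ is compact, $(\mu_n)$ has weak-$*$ limit points; let $\mu$ be any one of them, reached along a subsequence $n_j$. Since the cylinder $[u]$ is clopen, its indicator is continuous, so $\mu([u])=\lim_j\mu_{n_j}([u])\le\limsup_n\occ{x[1,n]}{u}/n< C\,|A|^{-\ell}=C\,\lambda([u])$ on every cylinder, and as cylinders generate the Borel $\sigma$-algebra this extends to $\mu\le C\lambda$, whence $\mu\ll\lambda$ with bounded density $g=d\mu/d\lambda$. A standard Krylov--Bogolyubov estimate, $\|\mu_n-T_*\mu_n\|\le 2/n\to0$, shows $\mu$ is $T$-invariant, hence $g$ is $T$-invariant $\lambda$-almost everywhere. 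Because $\lambda$ is ergodic for $T$, $g$ is $\lambda$-a.e.\ constant, and as both measures are probabilities the constant is $1$, i.e.\ $\mu=\lambda$. Thus every weak-$*$ limit point of $(\mu_n)$ equals $\lambda$, so $\mu_n\to\lambda$, the frequencies converge to their exact values $|A|^{-\ell}$, and $x$ is normal.

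The main obstacle is concentrated in the last step of the converse: upgrading a mere uniform \emph{upper} bound --- with one fixed constant $C$, not a constant tending to $1$ --- into \emph{exact} equidistribution. This is precisely the ``no hot spot'' phenomenon, and the ergodicity of $\lambda$ (a shift-invariant density dominated by an ergodic measure must be constant) is what defeats it. I expect this to be the only delicate point; the compactness, the invariance of limit points, and the domination $\mu\le C\lambda$ are all routine. A purely combinatorial alternative, closer to Piatetski-Shapiro's original, would instead show directly that any block $u$ with normalized upper frequency $|A|^{\ell}\limsup_n\occ{x[1,n]}{u}/n=\theta>1$ forces the existence of longer and longer words whose normalized upper frequency grows like a power of $\theta$, eventually exceeding the fixed $C$; this amplification of an over-representation to all scales is the combinatorial shadow of the ergodic argument. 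The only bookkeeping I would flag besides this is the passage between the non-aligned counts $\occ{\cdot}{\cdot}$ of the statement and the aligned counts $\alocc{\cdot}{\cdot}$ of the definition, which is handled once and for all by the classical equivalence of the two formulations of normality.
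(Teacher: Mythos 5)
The paper does not actually prove this lemma: it is quoted as an external result (Piatetski-Shapiro 1951, via Bugeaud's monograph) and used as a black box in the proof of Theorem~\ref{maintheo}, so there is no internal proof to compare against; what you have written is a genuine proof where the paper relies on a citation. Your ergodic-theoretic argument is correct in outline and is one of the standard ways to establish the Hot Spot phenomenon: empirical measures $\mu_n$, Krylov--Bogolyubov invariance of weak-$*$ limit points, domination $\mu \le C\lambda$ from the hypothesis, and then the classical fact that a $T$-invariant measure absolutely continuous with respect to an ergodic measure must equal it. Two points deserve flagging. First, both directions of your proof lean on the classical equivalence between the paper's aligned definition of normality (via $\alocc{\cdot}{\cdot}$) and sliding-block equidistribution (via $\occ{\cdot}{\cdot}$, equivalently $\mu_n \to \lambda$); this is the Niven--Zuckerman/Cassels theorem, which is independent of (and weaker than) Piatetski-Shapiro's lemma, so there is no circularity, but it is a nontrivial imported ingredient, not mere bookkeeping. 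Second, your step ``$g$ is $T$-invariant $\lambda$-a.e.'' is delicate for the \emph{one-sided} shift, which is not invertible: invariance of $\mu$ and $\lambda$ only gives $g\circ T = E_\lambda[g \mid T^{-1}\mathcal{B}]$ directly, and upgrading this to $g \circ T = g$ requires an extra argument (e.g.\ equality in conditional Jensen, using that $g$ is bounded). A cleaner route to the same conclusion avoids Radon--Nikodym derivatives entirely: for continuous $f$, Birkhoff's theorem and ergodicity of $\lambda$ give $\frac{1}{n}\sum_{i<n} f(T^i x) \to \int f\,d\lambda$ for $\lambda$-a.e.\ $x$, hence $\mu$-a.e.\ by $\mu \ll \lambda$; integrating against $\mu$ with dominated convergence and using $T$-invariance of $\mu$ yields $\int f\,d\mu = \int f\,d\lambda$, so $\mu = \lambda$. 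With that repair your proof stands. The trade-off between the two approaches is clear: the paper keeps its toolkit elementary and combinatorial by outsourcing the lemma, while your argument makes the lemma self-contained at the price of importing measure-theoretic machinery (weak-$*$ compactness, Birkhoff, ergodicity of the Bernoulli measure) that is foreign to the rest of the paper.
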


\section{Proof of  Theorem \ref{maintheo}}

%Recall that $A = \{0, 1, \ldots, b-1\}$, $\wh{A} = \{0, 1, \ldots, b\}$ and $r$ is the operator such that $r(v)$ is the word $v$ without all its $'b'$ symbols.
%\begin{theorem}\label{maintheo}
%Let $v \in A^{\omega}$ be a normal word then there exists some normal word $\wh{v} \in \wh{A}^{\omega}$ such that $r(\wh{v}) = v$.
%\end{theorem}
%\begin{proof}

 We construct inductively a sequence of nonempty finite 
substrings $\{v_i \}_{i\in \NN}$ of $v$ that verifies that 
$v_1v_2 \ldots v_k$ is a prefix of $v$ for all $k$ in $\NN$.
Suppose that we have already defined $v_1, v_2, \ldots, v_{n-1}$ and we want to define $v_n$. Let $L_{n-1} = |v_1v_2 \ldots v_{n-1}|$ be the total length of all substrings already defined. Since $v$ is normal, then $v \upharpoonleft L_{n-1}$ is also normal and consequently given 
$$
\eps_n = \frac{1}{(b+1)^{2^n}n}\frac{1}{3\max(b^nc_{2^n}, (b+1)^nc_{2^{n+1}})}
$$
 there exists a $k_n$ such that for all $k > k_n$ in $\NN$ we have 
$$
\Delta_{A,\ell_{2^{n+1}}}(v[L_{n-1}+1,L_{n-1}+k]) < \eps_n
$$
Take $t_n$ such that $t_n\ell_{2^n} > \max(k_n, \ell_{2^{n+1}})$ and define $v_n$ as 
$$v_n = v[L_{n-1}+1, L_{n-1}+t_n\ell_n]$$
It is clear that $v_1 v_2 \ldots v_n = v[1, L_n+t_n\ell_{2^n}]$ and thus is a prefix of $v$.\bigskip

Given $\{v_i \}_{i\in \NN}$ defined as above, we define the 
%extension 
expansion 
$\wh{v}$ as 
$$
\wh{v} = e_{2^1}(v_1)e_{2^2}(v_2) \ldots e_{2^i}(v_i) \ldots
$$
Since each $v_i$ has length $t_i\ell_{2^i}$ which is multiple of $\ell_{2^i}$, 
the 
%extension 
expansion is well defined. It follows easily that 
$$
r(\wh{v}) = r(e_{2^1}(v_1))r(e_{2^2}(v_2)) \ldots r(e_{2^i}(v_i)) \ldots = v.
$$ 
We claim that $\wh{v}$ is normal in base $b+1$.
We can write each $v_n$ as 
$$
v_n = v_{n, 1}v_{n, 2} \ldots v_{n, t_n}
$$
 where each $v_{n, i}$ satisfies $|v_{n, i}| = \ell_{2^n}$.
Fix $n \in \NN$ and $j \in \NN_0$ with $0 \leq j \leq t_{n+1}$, and define 
$$
v_{n+1}' = v_{n+1, 1}v_{n+1, 2} \ldots v_{n+1, j}
$$
 as the prefix of $v_{n+1}$ that consists of the first $j$ blocks of length $\ell_{2^{n+1}}$.
By definition of $v_n$, we have that 
\begin{equation}\label{deltavn}\Delta_{A,\ell_{2^{n+1}}}(v_n) < \eps_n
\end{equation}
 and since $v_nv'_{n+1}$ is a prefix of $v \upharpoonleft L_{n-1}$ of length greater than $k_n$ we have 
$$
\Delta_{A,\ell_{2^{n+1}}}(v_nv'_{n+1}) < \eps_n.
$$
Using Proposition~\ref{sufijo} we have that 
\begin{equation}\label{deltavpn}
\Delta_{A,\ell_{2^{n+1}}}(v'_{n+1}) < \frac{|v_nv'_{n+1}|+|v_n|}{|v'_{n+1}|}\eps_n.
\end{equation}
Now, by (\ref{deltavn}) and Proposition~\ref{mitad} we have that 
$$
\Delta_{A,\ell_{2^{n}}}(v_n) < b^{\ell_{2^n}}\eps_n
$$
 and applying Lemma~\ref{fund} we get 
\begin{equation}\label{deltaevn}
\Delta_{\wh{A},2^n}(e_{2^n}(v_n)) < b^{\ell_{2^n}}c_{2^n}\eps_n
\end{equation}
Similarly, applying Lemma~\ref{fund} to (\ref{deltavpn}) we get 
$$
\Delta_{\wh{A},2^{n+1}}(e_{2^{n+1}}(v'_{n+1})) 
< \frac{|v_nv'_{n+1}|+|v_n|}{|v'_{n+1}|}c_{2^{n+1}}\eps_n
$$
 and by Proposition~\ref{mitad} we conclude 
\begin{equation}\label{deltaevpn}
\Delta_{\wh{A},2^{n}}(e_{2^{n+1}}(v'_{n+1})) 
< \frac{|v_nv'_{n+1}|+|v_n|}{|v'_{n+1}|}(b+1)^{2^n}c_{2^{n+1}}\eps_n.
\end{equation}
Using Proposition~\ref{concat} with (\ref{deltaevn}) and (\ref{deltaevpn}) we get that 
\begin{equation}\label{cotadisc}
\Delta_{\wh{A},2^{n}}(e_{2^n}(v_n)e_{2^{n+1}}(v'_{n+1})) 
< 3\max(b^{\ell_{2^n}}c_{2^n}, (b+1)^{2^n}c_{2^{n+1}})\eps_n 
< \frac{1}{(b+1)^{2^n}n}.
\end{equation}
Notice that the bound does not depend on $j$. 
If $j = 0$ we get the special case 
\begin{equation}\label{cotadiscshort}
\Delta_{\wh{A},2^{n}}(e_{2^n}(v_n)) 
< \frac{1}{(b+1)^{2^n}n}.
\end{equation}
Now, we fix $u \in \wh{A}^m$ for some $m \in \NN$.
For $n \in \NN$ and $j \in \NN_0$ with $0 \leq j \leq t_n$, 
we define 
$$
L_{n,j} = |v_1v_2 \ldots v_{n}v_{n+1,1}v_{n+1,2}\ldots v_{n+1,j}|.
$$
Notice that $L_{n, t_n} = L_{n+1, 0}$. We define $L_{0,0} = 0$.
Given some $M \in \NN$ with $M > L_{1,0}$,
 there exists some $n, j \in \NN$ with $n > 1$ such that 
\begin{equation}\label{condm}
L_{n, j-1} \leq M \leq L_{n,j}.
\end{equation}
By Observation~\ref{concatapp} we get
\begin{equation}\label{cotaparcialraw}
\begin{gathered} \occ{\wh{v}[1,M]}{u} \leq \occ{\wh{v}[1,L_{n,j}]}{u} \leq 
\\ \left(\sum_{i=1}^{n-1} \occ{e_{2^i}(v_i)}{u}\right) + 
\occ{e_{2^n}(v_n)e_{2^{n+1}}(v_{n+1,1}) \ldots e_{2^{n+1}}(v_{n+1,j})}{u} + (n-1)(|u|-1).
\end{gathered}
\end{equation}
Given that we have (\ref{cotadiscshort}) for each term of the sum, 
we can apply Proposition~\ref{alinanoalin} and we get the bound
$$
\sum_{i=1}^{n-1} \occ{e_{2^i}(v_i)}{u} 
\leq \sum_{i=1}^{n-1} |e_{2^i}(v_i)| \left(\frac{|u|-1}{2^i} + \frac{1}{(b+1)^{|u|}} 
+ \frac{(b+1)^{2^i}}{(b+1)^{2^i}i} \right) - (n-1)(|u|-1).
$$
Noticing that 
$$\frac{|u|-1}{2^i}+\frac{1}{i} \rightarrow 0
\text{ as $i \rightarrow \infty$ }
$$
there exists some $i_0$ such that for all $i > i_0$ we have 
\begin{equation}
\label{cotasumaparcial}\frac{|u|-1}{2^i}+\frac{1}{i} \leq \frac{1}{(b+1)^{|u|}}.
\end{equation}
If $M$ is sufficiently large, we will have $n > i_0$ and then we can split the sum and get
\begin{equation}
\begin{split}
\sum_{i=1}^{n-1} \occ{e_{2^i}(v_i)}{u} 
\leq &\sum_{i=1}^{i_0} |e_{2^i}(v_i)| \left(\frac{|u|-1}{2^i} + \frac{1}{(b+1)^{|u|}} + \frac{1}{i} \right) + \\ 
&\sum_{i=i_0+1}^{n-1} |e_{2^i}(v_i)| \left(\frac{|u|-1}{2^i} + \frac{1}{(b+1)^{|u|}} + \frac{1}{i} \right) - \\
 &(n-1)(|u|-1).\end{split}
 \end{equation}
Calling 
$$
\delta = \sum_{i=1}^{i_0} |e_{2^i}(v_i)| 
\left(\frac{|u|-1}{2^i} + \frac{1}{(b+1)^{|u|}} + \frac{1}{i} \right)
$$
 (notice that $\delta$ does not depend on $M$) and using (\ref{cotasumaparcial}) we get
$$
\sum_{i=1}^{n-1} \occ{e_{2^i}(v_i)}{u} \leq \delta + \frac{2}{(b+1)^{|u|}}
\left(\sum_{i=i_0+1}^{n-1} |e_{2^i}(v_i)| \right)- (n-1)(|u|-1).
$$
Using that $|e_{2^i}(v_i)| = L_{i,0}-L_{i-1,0}$ we can reduce this to
\begin{equation}\label{cotaprimero}
\sum_{i=1}^{n-1} \occ{e_{2^i}(v_i)}{u} \leq \delta + 
(L_{n-1,0}-L_{i_0,0})\frac{2}{(b+1)^{|u|}}- (n-1)(|u|-1).
\end{equation}
Having (\ref{cotadisc}) and using Proposition~\ref{alinanoalin} 
with the second term of (\ref{cotaparcialraw}) we get
\begin{align*}
&\occ{e_{2^n}(v_n)e_{2^{n+1}}(v_{n+1,1}) \ldots e_{2^{n+1}}(v_{n+1,j})}{u} \leq
\\
&(L_{n,j}-L_{n-1,0}) \left(\frac{|u|-1}{2^n} + \frac{1}{(b+1)^{|u|}} + \frac{(b+1)^{2^n}}{(b+1)^{2^n}n} \right) - (|u|-1).
\end{align*}
Since $n > i_0$ we get
\begin{equation}\label{cotasegundo}\occ{
e_{2^n}(v_n)e_{2^{n+1}}(v_{n+1,1}) \ldots e_{2^{n+1}}(v_{n+1,j})}{u} 
\leq (L_{n,j}-L_{n-1,0}) \left(\frac{2}{(b+1)^{|u|}}\right).
\end{equation}
Using (\ref{cotaprimero}) and (\ref{cotasegundo}) in (\ref{cotaparcialraw}) we get
$$
\occ{\wh{v}[1,M]}{u} \leq \delta + (L_{n,j} - L_{i_0,0})\frac{2}{(b+1)^{|u|}}.
$$
Dividing both sides by $\occ{\wh{v}[1,M]}{u} = M$ we get
\begin{equation}\label{cotahotraw}
\frac{\occ{\wh{v}[1,M]}{u}}{M} \leq \frac{\delta}{M} + \frac{L_{n,j} - L_{i_0,0}}{M}\frac{2}{(b+1)^{|u|}}.
\end{equation}
By (\ref{condm}) we have that 
$$L_{n,j} - M \leq L_{n,j} - L_{n,j-1} = \wh{\ell}_{2^{n+1}}.
$$
By construction of $v_n$,
$$\ell_{2^{n+1}} \leq |v_n|.
$$ 
Then, since $e_{2^n}(v_n)$ is a substring of $\wh{v}[1,M]$ we get that 
$$
\wh{\ell}_{2^{n+1}} \leq |e_{2^n}(v_n)| \leq M.
$$
Which gives us the bound $L_{n,j} \leq 2M$. Using this in (\ref{cotahotraw}) we get
\begin{equation}\frac{\occ{\wh{v}[1,M]}{u}}{M} \leq \frac{\delta}{M} 
+ \frac{2M - L_{i_0,0}}{M}\frac{2}{(b+1)^{|u|}} < \frac{\delta}{M} + \frac{4}{(b+1)^{|u|}}.
\end{equation}
Taking limit superior as $M \rightarrow \infty$ and since $\delta$ does not depend on $M$ we get
\begin{equation}
\limsup_{M\rightarrow \infty} \frac{\occ{\wh{v}[1,M]}{u}}{M} \leq \frac{4}{(b+1)^{|u|}}.
\end{equation}
Since this bound is valid for all $u \in \wh{A}^*$, using the Lemma~\ref{hotspot} (Hot Spot Lemma)
with $C=4$ follows that $\wh{v}$ is normal. 
Therefore, we constructed a normal word $\wh{v}$ such that $r(\wh{v}) = v$ as desired.
This completes the proof of Theorem~\ref{maintheo}.\hfill \qedsymbol

%\end{proof}

\section{Some remarks about the proof of Theorem~\ref{maintheo}}

\subsection{On the choice of $w_n$}
We can study how flexible is the construction of the proof 
on the choice of the sequence $(w_n)$. We wonder what 
%others 
other
sequences we can choose so that the proof remains valid. Looking at the proof, 
the only places where we use the explicit construction of $(w_n)$ is 
%on 
in
Lemma~\ref{sumafund} and Lemma~\ref{fund}. The property of the sequence we are using is that 
$$\Delta_{n,\wh{A}} w_n = 0$$
This means that we can change the $w_n$ for some other 
sequence satisfying this property. In fact, if we only have that the discrepancy of $w_n$ is small, namely 
$$\Delta_{n,\wh{A}} w_n < \delta_n$$
we can, with a little more of work, obtain 
a bound similar to that of to Lemma~\ref{fund} 
but also involving the $\delta_n$. 
Then, we can use this bound 
in 
the proof of the Theorem~\ref{maintheo}. 
If we choose the $\delta_n$ to be  small enough 
(and maybe depending of the $\eps_n$) we can adapt the proof to work for this new sequence $(w_n)$.
Any normal number $z \in \wh{A}^{\omega}$ can be
split it  in consecutive strings  $z_1, z_2, \ldots, z_n, \ldots$ 
such that 
$$
z = z_1z_2 \ldots z_n \ldots
$$
and for each $n$, the word $z_n$ satisfies
$$
\Delta_{n, \wh{A}}(z_n) < \delta_n.
$$
If the lengths of $z_n$ do not grow larger than exponential on $n$, 
we can use this sequence $(z_n)$ 
as an alternative for $(w_n)$ to expand normal numbers. 
This means that we can do the process to expand 
a normal number to $\wh{A}$ with substrings of any normal number in $\wh{A}$ 
that has a partition into substrings with this property.

\subsection{On the computability of the construction}
If we know the convergence rates of the normal word to expand, we can calculate $e_n$ for all $n \in \NN$ 
and we can easily compute the expanded word. If we don't know anything 
about the convergence rates, we can still compute 
the expanded word with a finite-injury priority method \cite{rogers1987theory}, 
but we will not know how good will be our approximation at each step of the algorithm.

\bibliographystyle{plain}
\bibliography{ProblemaDeExtensionBib}
\end{document}